\newcommand{\wh}[1]{\ensuremath{\widehat{#1}}}
\newcommand{\Map}{\ensuremath{\operatorname{Map}}}
\newcommand{\from}{\ensuremath{\nobreak\colon\nobreak}}
\renewcommand{\to}{\ensuremath{\nobreak\rightarrow\nobreak}}
\newcommand{\tot}{\mathrm{Tot}}
\newcommand{\colim}{\mathrm{colim}}
\newcommand{\tops}{\mathbf{Top}}
\newcommand{\ktop}{\mathbf{kTop}}
\newcommand{\fktop}{\mathrm{k}}
\newenvironment{tabsection}{}{}
\newtheorem{definition}{Definition}[section]
\newtheorem{remark}[definition]{Remark}
\newtheorem*{proof}{Proof}
\newtheorem*{nntheorem}{Theorem}
\newtheorem{lemma}[definition]{Lemma}
\newtheorem{proposition}[definition]{Proposition}
\newtheorem{theorem}[definition]{Theorem}
\newtheorem{corollary}[definition]{Corollary}
\begin{document}

\title{Topological Group Cohomology with Loop Contractible Coefficients}
\author{Martin Fuchssteiner\\{\small\texttt{martin@fuchssteiner.net}}\and{}Christoph Wockel\\{\small\texttt{christoph@wockel.eu}}}
\maketitle

\begin{abstract}
 We show that for topological groups and loop contractible coefficients
 the cohomology groups of continuous group cochains and of group
 cochains that are continuous on some identity neighbourhood are
 isomorphic. Moreover, we show a similar statement for compactly
 generated groups and Lie groups holds and apply our results to
 different concepts of group cohomology for finite-dimensional Lie
 groups.
\end{abstract}

\section*{Introduction}

\begin{tabsection}
 For a topological group $G$ and a continuous $G$-module $A$ exist various
 concepts of ``topological group cohomology'', i.e., cohomology groups taking
 the topological properties of $G$ appropriately into account
 \cite{Segal70Cohomology-of-topological-groups,Deligne74Theorie-de-Hodge.-III,Moore76Group-extensions-and-cohomology-for-locally-compact-groups.-III}.
 When approaching from concrete cocycle models, there exist the naive notion of
 the cohomology $H_c^{*} (G,A)$ of the complex $C_{c}^* (G,A)$ of continuous
 group cochains and the more ad-hoc notion of the cohomology $H_{lc}^{*} (G,A)$
 of the complex $C^{*}_{lc}(G,A)$ of group cochains which are continuous only
 on some identity neighbourhood. The inclusion
 $C_{c}^{*}(G,A)\hookrightarrow C_{lc}^{*}(G,A)$ induces a homomorphism
 $H_{c}^{*}(G,A)\to H_{lc}^{*}(G,A)$ in cohomology, comparing these two
 notions.
 
 The locally continuous cohomology arises when combining non-trivial algebraic
 and topological information. For instance, the universal covering of a locally
 contractible and connected topological group $K$ is a central extension
 \begin{equation*}
  \pi_{1}(K)\to \widetilde{K}\to K,
 \end{equation*}
 which gives rise to a universal class in $H_{lc}^{2}(K,\pi_{1}(K))$, but
 $\widetilde{K}$ cannot be described by a class in $H_c^{*} (K,\pi_{1}(K))$.
 This construction has an immediate generalisation to higher connected covers
 \cite[Example
 V.2]{WagemannWockel11A-Cocycle-Model-for-Topological-and-Lie-Group-Cohomology}.
 If $K$ is, in addition, a (possibly infinite-dimensional) Lie group, and $Z$
 is an abelian Lie group, then the analogous notion of locally smooth
 cohomology $H^{2}_{ls}(K,Z)$ arises naturally in the integration problem of
 central and abelian extensions of Lie algebras
 \cite{Neeb02Central-extensions-of-infinite-dimensional-Lie-groups,Neeb04Abelian-extensions-of-infinite-dimensional-Lie-groups}.
 
 If $G$ is connected, then the continuous group cohomology $H^2_c (G,A)$
 describes abelian Lie group extensions which admit continuous global sections,
 i.e. they are trivial principal $A$-bundles. The ``locally continuous'' group
 cohomology $H^2_{lc} (G,A)$ describes abelian Lie group extensions which only
 admit local continuous sections, i.e. they are locally trivial principal
 $A$-bundles. For paracompact $G$ and (loop) contractible\footnote{A
 topological group $A$ is called loop contractible if there exists a
 contraction $H:A \times I \rightarrow A$ such that for each $t \in $ the map
 $H(-,t):A \rightarrow A$ is an endomorphism of $A$.} $A$ the Lie group
 extensions described by $H^2_{ls} (G,A)$ also admit a continuous global
 section, hence the morphism $i^2 : H^2_c (G,A) \rightarrow H^2_{lc} (G,A)$ is
 an isomorphism in this case. We show that this is true in every degree and
 that the same also holds for a Lie group $G$, a smooth $G$-module $A$ and the
 inclusion $C_{s}^* (G,A) \hookrightarrow C_{ls}^* (G,A)$ of smooth into
 ``locally smooth'' cochains:
\end{tabsection}

\begin{nntheorem}
 If $G$ is a topological group and $A$ a continuous $G$-module, then the
 natural inclusion
 \begin{equation*}
  C_{c}^* (G,A) \hookrightarrow C_{lc}^* (G,A)
 \end{equation*}
 induces an isomorphism in cohomology provided $A$ is loop contractible
 (e.g. a  topological vector space). If $G$ is a Lie group whose finite
 products are smoothly paracompact and $A$ is a smooth $G$-module, then
 \begin{equation*}
  C_{s}^* (G,A) \hookrightarrow C_{ls}^* (G,A)
 \end{equation*}
 induces an isomorphism in cohomology if $A$ is smoothly loop contractible.
\end{nntheorem}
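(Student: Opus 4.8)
The plan is to prove both assertions by a single homological argument, carrying out the continuous versus locally continuous case in detail and then recording the (mostly cosmetic) changes needed for the smooth versus locally smooth case. Since $C_{c}^{*}(G,A)$ is a subcomplex of $C_{lc}^{*}(G,A)$, there is a short exact sequence of cochain complexes
\begin{equation*}
 0 \to C_{c}^{*}(G,A) \to C_{lc}^{*}(G,A) \to Q^{*}(G,A) \to 0,
\end{equation*}
where $Q^{*}(G,A)$ is the quotient of locally continuous cochains by the globally continuous ones. By the associated long exact sequence the inclusion induces an isomorphism in every degree precisely when $Q^{*}(G,A)$ is acyclic, so the entire problem reduces to producing a contracting homotopy on $Q^{*}(G,A)$. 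It is worth stressing at the outset that loop contractibility does \emph{not} make $H_{c}^{*}(G,A)$ or $H_{lc}^{*}(G,A)$ vanish individually (a homotopy of chain maps is not a chain contraction unless one can integrate, as for a topological vector space); the content is only that the relative complex $Q^{*}(G,A)$ is acyclic, and this is exactly what the loop contraction is designed to witness.

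The raw material for the contraction is the loop contraction $H\from A \times I \to A$. Writing $H_{t} := H(-,t)$ one has $H_{0} = \mathrm{id}_{A}$, $H_{1} = 0$, and each $H_{t}$ is a continuous endomorphism of the topological group $A$. The endomorphism property is the crucial structural input: because $H_{t}$ is additive, post-composition $f \mapsto H_{t}\circ f$ is a homomorphism of cochain groups that is compatible with the simplicial face maps, so the family $(H_{t})_{t\in I}$ deforms the identity into the zero map through operators that interact controllably with the coboundary. I would use this, together with the standard simplicial contraction of the bar complex, to build a degree $-1$ operator on $Q^{*}(G,A)$ and verify the identity $\delta s + s\delta = \mathrm{id}$ on the quotient. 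The point is that a class in $H^{n}(Q^{*})$ is represented by a locally continuous cochain $f$ whose coboundary is globally continuous; the contraction must then exhibit $f$, modulo a locally continuous coboundary, as a globally continuous cochain, so that the class dies in $Q^{*}$.

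The step I expect to be the main obstacle is the passage from local to global continuity. A cochain in $C_{lc}^{*}(G,A)$ is continuous only on some $U^{n}$ for an identity neighbourhood $U$, whereas the loop contraction of $A$ is a global datum; the homotopy operator must therefore transform a purely local continuity property into a genuine global correction that is continuous on all of $G^{n}$ and that still commutes with $\delta$, where the $G$-action in the coboundary makes the bookkeeping delicate. My intention is to globalise the locally defined continuous corrections by a partition of unity subordinate to the translates of $U$, arranged to respect the simplicial structure of the bar complex, the additivity of each $H_{t}$ guaranteeing that the patched object is again an honest cochain of the complex. This is the only place where paracompactness is genuinely used, and it is what allows the local homotopy to be assembled into a global contracting homotopy of $Q^{*}(G,A)$.

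For the smooth statement the argument is formally identical: one replaces $C_{lc}^{*}$ by $C_{ls}^{*}$, continuity on $U^{n}$ by smoothness on an identity neighbourhood, and the loop contraction by the smooth loop contraction, whose maps $H_{t}$ are now smooth group endomorphisms. The homological reduction and the homotopy construction transfer verbatim, and the single input that must be replaced is the globalisation: smooth partitions of unity are available precisely because the finite products of $G$ are assumed smoothly paracompact, which is exactly the hypothesis in the statement. Hence the same contracting homotopy shows that the smooth relative complex $C_{ls}^{*}(G,A)/C_{s}^{*}(G,A)$ is acyclic, giving the second isomorphism.
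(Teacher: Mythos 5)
Your reduction to the acyclicity of the quotient complex $Q^{*}=C_{lc}^{*}(G,A)/C_{c}^{*}(G,A)$ is sound, but the proof of that acyclicity is not carried out, and the two steps you defer are precisely where the approach breaks down. The most serious problem is equivariance: $C_{c}^{*}(G,A)$ and $C_{lc}^{*}(G,A)$ consist of \emph{homogeneous} cochains, i.e.\ $G$-equivariant maps $G^{n+1}\to A$, so any contracting homotopy of $Q^{*}$ must preserve equivariance. The ``standard simplicial contraction of the bar complex'' you invoke (inserting a fixed group element into the first slot) destroys equivariance --- this is exactly why group cohomology is nonzero even though the bar complex is acyclic --- and post-composition with the loop contraction $H_{t}$ does nothing to repair it. Likewise, a partition of unity subordinate to the translates $\{gU\}$ is not $G$-invariant, and for non-compact $G$ there is no way to average it; so the ``patched'' cochain you construct will not lie in the equivariant complex, and the identity $\delta s+s\delta=\mathrm{id}$ has no chance of holding there. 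The paper circumvents this obstruction by a genuinely different device: a double complex $A_{lc}^{*,*}(G,A)^{G}$ whose rows admit an explicit equivariant contraction (because the extra ``continuous'' variables give room to insert $y_{0}$ without breaking equivariance), together with the averaging trick $f_{eq}(\vec{x},\vec{y})=x_{0}.f(x_{0}^{-1}.\vec{x},x_{0}^{-1}.\vec{y})$ of Proposition \ref{keyprop}, which transfers \emph{coboundaries} (not contractions) from the non-equivariant to the equivariant setting; the actual use of loop contractibility is then pushed into a non-equivariant Alexander--Spanier comparison (Proposition \ref{prop:loop_contractible_coefficients}, relying on \cite{F11a,F11b}).

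The second gap is your globalisation step. You state that partitions of unity are ``the only place where paracompactness is genuinely used,'' but the topological half of the theorem assumes \emph{no} paracompactness of $G$ whatsoever; the paper deliberately handles non-paracompact $G$ via \cite[Corollary 2.14]{F11a} and its adaptation. Moreover, a partition-of-unity argument needs to multiply values in $A$ by real scalars, which is unavailable for a general loop contractible abelian group; the loop contraction can serve as a substitute for scalar multiplication, but making this compatible with additivity of cochains and with $\delta$ is a nontrivial construction that you do not supply --- indeed you concede that one cannot integrate the homotopy parameter, which is exactly the device that would normally turn the homotopy $H_{t}$ into a degree $-1$ operator. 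As it stands, the proposal identifies the correct target (acyclicity of the relative complex) but supplies neither the contraction nor a mechanism compatible with equivariance, and it smuggles in a hypothesis (paracompactness) that the theorem does not grant.
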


\begin{tabsection}
 We also show that similar result also holds for compactly generated
 topological groups and modules. This has as an important consequence that the
 ``locally continuous'' cohomology coincides with the other mentioned
 topological group cohomologies, see \cite[Theorem
 IV.5]{WagemannWockel11A-Cocycle-Model-for-Topological-and-Lie-Group-Cohomology},
 which then has consequences for the regularity properties of measurable cocycles for general locally compact groups, see the end of Section 3 in \cite{Austin11On-discontinuities-of-cocycles-in-cohomology-theories-for-topological-groups}.

 As an application of our main result we show that it has as a consequence that
 ``locally smooth'' and ``locally continuous'' cohomology for
 finite-dimensional Lie groups coincide under a mild technical assumption.
\end{tabsection}

\section{Continuous and ``Locally Continuous'' Cochains}

Let $G$ be a topological group and $A$ be a topological $G$-module\footnote{In 
contrast to Section \ref{sect:the_compactly_generated_case} we thus assume that 
the multiplication and module map $G\times G\to G$ and $G\times A\to A$ are 
continuous for the product topology.}. In this section
we introduce the complexes of continuous and ``locally continuous''
(group) cochains and a double complex that will do the work for us when
showing the main theorem. We start with the \emph{continuous standard
cochain complex} $A^*_{c} (G,A)$ and its subcomplex $C_{c}^* (G,A)$ of
continuous homogeneous group cochains.

%Recall that 
The \emph{standard cochain complex} is the complex 
$A^* (G , A):=\Map (G^{*+1},A )$ with differential \begin{equation*}
d \from A^n (G,A) \to A^{n+1}(G,A),\quad d f(g_{0},...,g_{n+1})=
\sum_{i=0}^{n+1}(-1)^{i}f(g_{0},...,\wh{g_{i}},...,g_{n+1}) \, , 
\end{equation*} 
and is always exact. (It is obtained by applying $\Map (-,A)$ to the standard
resolution of $\mathbb{Z}G$ cf. \cite[Chapter I.5]{Brown82Cohomology-of-groups}).
The continuous cochains form a sub-complex $C (G^{*+1},A )$, which is also 
exact and which we denote by $A_{c}^{*} (G,A)$. Somewhere in between there 
exists 
a complex of ``locally continuous'' cochains, which we are going to define now:

\begin{definition}
For each identity neighbourhood $U$ of a topological group $G$ we define the open diagonal neighbourhood 
$\Gamma_U^0 :=G$ in $G$ and $\Gamma_{U}^{p}$ in $G^{p+1}$ by setting 
 \begin{equation*}
  \Gamma_{U}^{p}:=\{(g_{0},...,g_{p})\in G^{p+1} \mid \, 
\forall \; 0 \leq i,j \leq p : g_i^{-1} g_j \in U\} \, ,
\end{equation*}
for $p \geq 1$. For each topological $G$-module $A$ the sub-complex 
\begin{equation*}
 A_{lc}^* (G,A):=\{f\from G^{*+1}\to A\mid f\text{ is continuous on some }\Gamma_{U}\}
\end{equation*}
of the standard complex is called the 
\emph{complex of locally continuous cochains}.
\end{definition}

The topological group $G$ acts on the open neighbourhood $\Gamma_{U}^{p}$ of the 
diagonal of $G^{p+1}$ via the diagonal action 
(i.e. by $g.(g_{0},...,g_{p})=(gg_{0},...,gg_{p})$), 
and thus $G$ acts on the cochain groups $A^{p}(G,A)$, $A_{lc}^{p}(G,A)$ and 
$A_{c}^{p}(G,A)$ via
 \begin{equation*}
  (g.f)(g_{0},...,g_{p})=g.\big(f(g^{-1}.(g_{0},...,g_{p}))\big) \, 
 \end{equation*}
leaving the corresponding sub-complexes invariant. The $G$-fixed points of 
this action are the $G$-equivariant functions, which are also called
\emph{homogeneous group cochains}.
The action intertwines the differentials on 
the complexes. As a consequence the subgroups
\begin{equation*}
C^{*}(G,A):=A^{*} (G,A)^{G},\;
C^{*}_{lc}(G,A):=A^{*}_{lc} (G,A)^G\;\text{ and }\;
C^{*}_{c}(G,A):=A^{*}_c(G,A)^G
\end{equation*}
form sub 
complexes of $A^{*}(G,A)$. 
These are the complexes of homogeneous group cochains, locally continuous 
homogeneous group cochains and continuous homogeneous group cochains respectively. 
We denote the corresponding cohomology groups by 
$H^{*} (G , A)$, $H_{lc}^{*} (G,A)$ and $H^{*}_{c} (G,A)$.

Let $\mathcal{U}_1$ be the neighbourhood filter of the identity in $G$ and 
consider the abelian groups 
\begin{equation*}
  A_{lc}^{p,q} ( G , A ) := \left\{ f:G^{p+1} \times G^{q+1} \rightarrow A
    \mid \; \exists U \in \mathcal{U}_1 \, : f_{\mid G^{p+1} \times
      \Gamma_U^q} \; \text{is continuous} \right\}
\, .
\end{equation*}
The abelian groups $A_{lc}^{p,q} ( G , A )$ form a first quadrant double 
complex with vertical and horizontal differentials given by 
\begin{align*}
 d_{h}^{p,q}\from A_{lc}^{p,q}\to A_{lc}^{p+1,q}, 
& \quad d_{h}^{p,q}(f^{p,q})(x_{0},...,x_{p+1},\vec{y})
=\sum_{i=0}^{p+1}(-1)^{i}f^{p,q}(x_{0},...,\wh{x_{i}},...,x_{p+1},\vec{y})\\
 d_{v}^{p,q}\from A_{lc}^{p,q}\to A_{lc}^{p,q+1}, 
&\quad d_{v}^{p,q}(f^{p,q})(\vec{x},y_{0},...,y_{q+1})
= (-1)^p \sum_{i=0}^{q+1}(-1)^{i}f^{p,q}(\vec{x},y_{0},...,\wh{y_{i}},...,y_{q+1}).
\end{align*}
The subgroups $A^{p,q}_{c} ( G , A ):= C (G^{p+1} \times G^{q+1} , A)$ 
of the groups $A_{lc}^{p,q} (G, A )$ form a sub double complex. 
Furthermore the groups $A_{lc}^{p,q} (G , A )^G$ and 
$A^{p,q}_{c} (G , A )^G$ of equivariant locally continuous and equivariant continuous
cochains form a sub double complex of 
$A_{lc}^{*,*} (G , A )$ and $A^{*,*}_{c} (G, A )$ respectively.

The rows $A_{lc}^{*,q} ( G , A )^G$ of the double complex 
$A_{lc}^{*,*} ( G, A )^G$ of equivariant cochains can be augmented by the 
cochain groups $C_{lc}^q ( G , A)$ of locally continuous homogeneous group 
cochains and the columns $A_{lc}^{p,*} ( G , A )^G$ can be augmented by the 
cochain groups $C_c^q ( G , A)$ of continuous homogeneous group 
cochains (cf. the computations in \cite[Section 2]{F11b}):

\begin{equation*}
  \vcenter{
  \xymatrix{
\vdots & \vdots & \vdots & \vdots \\ 
C_{lc}^2 (G ,A) \ar[r] \ar[u]_{d} 
& A_{lc}^{0,2} (G ,A)^G \ar[r]^{d_{h}}\ar[u]_{d_{v}} 
& A_{lc}^{1,2} (G ,A)^G \ar[r]^{d_{h}}\ar[u]_{d_{v}} 
& A_{lc}^{2,2} (G ,A)^G \ar[r]^(.6){d_{h}}\ar[u]_{d_{v}} & \cdots \\
C_{lc}^1 (G,A) \ar[r] \ar[u]_{d} 
& A_{lc}^{0,1} (G ,A)^G \ar[r]^{d_{h}}\ar[u]_{d_{v}} 
& A_{lc}^{1,1} (G ,A)^G \ar[r]^{d_{h}}\ar[u]_{d_{v}} 
& A_{lc}^{2,1} (G ,A)^G \ar[r]^(.6){d_{h}}\ar[u]_{d_{v}} 
& \cdots \\
C_{lc}^0 (G ,A) \ar[r] \ar[u]_{d} 
& A_{lc}^{0,0} (G ,A)^G \ar[r]^{d_{h}}\ar[u]_{d_{v}} 
& A_{lc}^{1,0} (G ,A)^G \ar[r]^{d_{h}}\ar[u]_{d_{v}} 
& A_{lc}^{2,0} (G ,A)^G \ar[r]^(.6){d_{h}}\ar[u]_{d_{v}} & \cdots \\
% A \ar@{|->}[r]^{d_{h}}\ar@{|->}[u]_{d_{v}}  
& C_c^0 (G,A) \ar[r]^{d}\ar[u] 
& C_c^1 (G,A) \ar[r]^{d}\ar[u] 
& C_c^2 (G,A) \ar[r]^(.6){d} \ar[u] 
& \cdots
}}
\end{equation*}
We denote the total complex of this double complex by 
$\tot  A_{lc}^{*,*} (G ,A)^G$. 
The augmentations of the rows induces a morphisms 
$j_h^* : C_{lc}^* (G,A) \hookrightarrow \tot A_{lc}^{*,*} (G ,A)^G$ of cochain
complexes. 
Likewise, the augmentations of the columns induces a morphism 
$j_v^* : C_c^* (G,A) \hookrightarrow \tot A_{lc}^{*,*} (G ,A)^G$. 
On each row $A_{lc}^{*,q} ( G , A )^G$ one can define a contracting homotopy 
$h^*$ by setting
   \begin{eqnarray}
    h^p : A^{p,q}_{lc} (G , A)^G & \rightarrow & A^{p-1,q}_{lc} (G , A)^{G}\notag \\
h^p (f)  (x_0,\ldots,x_{p-1},\vec{y}) & = & (-1)^p f
(x_0,\ldots,x_{p-1},y_0, \vec{y}) \label{eqn:contracting_homotopy}\,
\end{eqnarray}
(cf. the computations in \cite[Lemmata 2.1, 2.3, 2.8]{F11b} and 
\cite[Proposition 14.3.3]{F10}). 
As a consequence we note:
\begin{lemma}\label{lem:horizontal_isomorphism}
  The inclusion 
$j_h^* : C_{lc}^* (G,A) \hookrightarrow \tot A_{lc}^{*,*} (G ,A)^G$ induces an 
isomorphism in cohomology.
\end{lemma}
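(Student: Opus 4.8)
The plan is to reduce the statement to the standard comparison principle for augmented first-quadrant double complexes: if every row is exact after augmentation, then the augmentation column is quasi-isomorphic to the total complex. First I would record that the maps $h^{p}$ for $p\geq 1$, together with the $p=0$ instance $h^{0}\from A_{lc}^{0,q}(G,A)^{G}\to C_{lc}^{q}(G,A)$, $h^{0}(f)(\vec{y})=f(y_{0},\vec{y})$, satisfy the contracting-homotopy identities along each augmented row
\begin{equation*}
 0\longrightarrow C_{lc}^{q}(G,A)\xrightarrow{\,\varepsilon\,}A_{lc}^{0,q}(G,A)^{G}\xrightarrow{\,d_{h}\,}A_{lc}^{1,q}(G,A)^{G}\xrightarrow{\,d_{h}\,}\cdots,
\end{equation*}
where $\varepsilon$ denotes the row augmentation from the diagram. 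Concretely, $d_{h}h+hd_{h}=\mathrm{id}$ on $A_{lc}^{p,q}(G,A)^{G}$ for $p\geq 1$, while $h^{0}\circ\varepsilon=\mathrm{id}$ exhibits $\varepsilon$ as a split monomorphism; hence every augmented row is exact. This is the only point at which the explicit formula \eqref{eqn:contracting_homotopy} is used.

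With this exactness in hand I would run the spectral sequence of the double complex $A_{lc}^{*,*}(G,A)^{G}$ obtained by first taking cohomology in the horizontal direction $d_{h}$. Exactness of the augmented rows collapses the $E_{1}$-page onto the augmentation column: the augmentation $\varepsilon$ induces isomorphisms $E_{1}^{0,q}\cong C_{lc}^{q}(G,A)$, and $E_{1}^{p,q}=0$ for $p\geq 1$. The differential $d_{1}$ is induced by the vertical differential $d_{v}$, which on cochains independent of the $G^{p+1}$-coordinate is exactly the standard differential $d$ of $C_{lc}^{*}(G,A)$. Therefore $E_{2}^{0,q}=H_{lc}^{q}(G,A)$ and $E_{2}^{p,q}=0$ for $p\geq 1$, so the spectral sequence degenerates at $E_{2}$; as the double complex lies in the first quadrant each antidiagonal is finite and there is no convergence issue, yielding $H^{n}\big(\tot A_{lc}^{*,*}(G,A)^{G}\big)\cong H_{lc}^{n}(G,A)$.

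The one remaining point---and where whatever mild subtlety there is resides---is to identify this abstract isomorphism with the map induced by $j_{h}^{*}$. Since $E_{1}$ is concentrated in the column $p=0$, the edge homomorphism $H^{*}\big(\tot A_{lc}^{*,*}(G,A)^{G}\big)\to E_{\infty}^{0,*}=E_{2}^{0,*}=H_{lc}^{*}(G,A)$ is an isomorphism whose inverse is, by construction, induced by the augmentation, i.e. by $j_{h}$; thus $j_{h}^{*}$ is an isomorphism. A reader who prefers to avoid spectral sequences can instead build an explicit retraction $r\from\tot A_{lc}^{*,*}(G,A)^{G}\to C_{lc}^{*}(G,A)$ by iterating $h$ along each antidiagonal and check $r\circ j_{h}=\mathrm{id}$ together with a staircase homotopy $j_{h}\circ r\simeq\mathrm{id}$, all of which follows formally from $d_{h}h+hd_{h}=\mathrm{id}$. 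I do not expect a genuine obstacle: once the augmented rows are known to be exact the result is purely homological, and the only care needed lies in tracking the interaction of $h$ with $d_{v}$ when matching the edge isomorphism to $j_{h}^{*}$.
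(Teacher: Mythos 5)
Your proposal is correct and takes essentially the same route as the paper: everything rests on the row contracting homotopy \eqref{eqn:contracting_homotopy} (extended over the augmentation by your $h^{0}$), after which the lemma is the standard fact that exactness of the augmented rows of a first-quadrant double complex makes the augmentation column quasi-isomorphic to the total complex. The paper delegates exactly this formal step to the computations in the cited references, while you spell it out via the row-filtration spectral sequence and the edge-map identification --- a presentational rather than mathematical difference.
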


Note that this contraction does not work in vertical direction since it
would violate the continuity assumptions on the elements of
$A^{p,q-1}_{lc}(G,A)^{G}$.

\section{Continuous and Locally Continuous Group Cohomology}
\label{sect:continuous_and_locally_continuous_group_cohomology}

In the following we will show that for loop contractible coefficients
the inclusion of the complex $C_c^* (G,A)$ of continuous group cochains into the 
complex $C_{lc}^* (G,A)$ induces an isomorphism 
$H_c^{*} (G,A) \cong H_{lc}^{*} (G,A)$. 

Recall that a topological group $A$ is called \emph{loop contractible} if it is
contractible by a homotopy $H\from [0,1]\times A\to A$ such that each
$H(t,\mathinner{\cdot})$ is a group homomorphism (cf.\ \cite[Section 5]{F10}). 
It is the key observation of 
this article that this requirement on $A$ allows to adapt the procedure from
\cite{F11b} to our setting. 

The following will rely on the row exactness of the double complexes 
$A_c^{*,*} (G , A )^G$ and $A_{lc}^{*,*} ( G , A )^G$ from the previous section
and the following observation:

\begin{proposition} \label{keyprop}
 If the augmented column complexes
 $A_{c}^{p}(G,A) \hookrightarrow A^{p,*}_{lc}(G ,A)$ are exact, then
 the augmented sub column complexes
 $C^p_c (G,A) \hookrightarrow A_{lc}^{p,*} (G,A)^G$ of equivariant
 cochains are exact as well.
\end{proposition}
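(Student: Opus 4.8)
The plan is to realise the augmented equivariant column as a direct summand of the augmented non-equivariant column, so that its exactness follows at once from the hypothesis. Since cohomology commutes with direct sums of complexes, a direct summand of an exact complex is exact, so it suffices to produce a $G$-equivariant idempotent chain endomorphism $P$ of the augmented column $A_c^p(G,A)\hookrightarrow A_{lc}^{p,*}(G,A)$ whose image is precisely the subcomplex of equivariant cochains. The construction of $P$ cannot average over $G$ (which is impossible for infinite $G$); instead it exploits that $G$ acts freely on the coordinate $x_0$, so that every orbit has a canonical representative with first coordinate $e$.

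Concretely, first I would set
\begin{equation*}
 (Pf)(x_0,\dots,x_p,\vec y)=x_0.\,f\big(e,x_0^{-1}x_1,\dots,x_0^{-1}x_p,x_0^{-1}y_0,\dots,x_0^{-1}y_q\big),
\end{equation*}
together with the analogous formula (without the $\vec y$-variables) on the augmentation term $A_c^p(G,A)$. A direct computation shows that $Pf$ is $G$-invariant and that $P$ restricts to the identity on invariant cochains, whence $P^2=P$ with image exactly $A^{p,q}_{lc}(G,A)^G$ (respectively $C^p_c(G,A)$). Since $d_v$ only permutes and deletes the $\vec y$-variables and commutes with the module action by $x_0$, and since the augmentation is induced by the projection forgetting the single $\vec y$-coordinate, $P$ commutes with every vertical differential and with the augmentation, so it is an endomorphism of the whole augmented column.

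The step that requires genuine care is checking that $P$ preserves the local-continuity condition, i.e.\ maps $A^{p,q}_{lc}(G,A)$ into itself. Here the point is that the substitution $(x_0,\dots,x_p,\vec y)\mapsto(e,x_0^{-1}x_1,\dots,x_0^{-1}x_p,x_0^{-1}y_0,\dots,x_0^{-1}y_q)$ is built from continuous group operations and preserves the defining condition of $\Gamma_U^q$: if $y_i^{-1}y_j\in U$ for all $i,j$, then $(x_0^{-1}y_i)^{-1}(x_0^{-1}y_j)=y_i^{-1}y_j\in U$ as well. Consequently, if $f$ is continuous on $G^{p+1}\times\Gamma_U^q$, then $Pf$ is continuous on $G^{p+1}\times\Gamma_U^q$ for the \emph{same} $U$, using continuity of the module map. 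With $P$ thus established as an idempotent chain endomorphism compatible with the augmentation, the augmented column splits as the direct sum of the subcomplexes $\operatorname{im}P$ and $\ker P$; the total column is exact by hypothesis, hence so is each summand, and $\operatorname{im}P$ is precisely the augmented equivariant column $C^p_c(G,A)\hookrightarrow A^{p,*}_{lc}(G,A)^G$. I note that loop contractibility of $A$ plays no role in this argument: it is needed only to verify the hypothesis that the non-equivariant columns are exact.
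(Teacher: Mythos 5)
Your proof is correct, and its engine coincides with the paper's: your projection $P$, given by $(Pf)(\vec{x},\vec{y})=x_0.f(x_0^{-1}.\vec{x},\,x_0^{-1}.\vec{y})$, is exactly the formula by which the paper manufactures the equivariant cochain $f_{eq}^{p,q-1}$ from a (non-equivariant) preimage $f^{p,q-1}$ of a given equivariant vertical cocycle. The organizational difference is that the paper applies this twist once per cocycle and checks by hand that the result is equivariant and has the prescribed vertical coboundary (using that $d_v$ is equivariant and that the given cocycle is), whereas you promote the twist to an idempotent chain endomorphism of the entire augmented column and conclude via the splitting into $\operatorname{im}P$ and $\ker P$, together with the fact that a direct summand of an exact complex is exact. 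The identities you verify ($P$ lands in the invariants, restricts to the identity there, and commutes with $d_v$ and the augmentation) are the same ones the paper uses, so the substance is identical; what your packaging buys is that two points the paper leaves tacit become explicit: that the twisted cochain still satisfies the local-continuity condition with the \emph{same} $U$ (because the diagonal action preserves $\Gamma_U^q$ and the module action is continuous --- without this, $f_{eq}^{p,q-1}$ would not be known to lie in $A_{lc}^{p,q-1}(G,A)^G$ at all), and that the augmentation term $A_c^p(G,A)\to C_c^p(G,A)$ is handled uniformly with the spots of positive vertical degree, rather than only implicitly. In short: same key construction, presented as a global projection and splitting rather than as a cocycle-by-cocycle correction; both routes are complete, and yours is marginally more careful on the points just named.
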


\begin{proof}
For the sake of completeness we recall the proof given in 
\cite[Proposition 3.1]{F11b}. 
 Assume that the augmented column complexes
 $A_{c}^{p}(G,A) \hookrightarrow A^{p,*}_{lc}(G ,A)$ are exact. Each
 equivariant vertical cocycle $f_{eq}^{p,q} \in A_{lc}^{p,q} (G,A)^G$ is
 the vertical coboundary $d_v f^{p,q-1}$ of a (not necessary
 equivariant) cocycle $f^{p,q-1}$ in $A_{lc}^{p,q-1} (G,A)$.
Define an equivariant cochain $f_{eq}^{p,q-1}$ of bidegree $(p,q-1)$ via
 \begin{equation*}
  f_{eq}^{p,q-1} (\vec{x},\vec{y}):= 
  x_0 . f^{p,q-1} (x_0^{-1} . \vec{x}, x_0^{-1} . \vec{y})
 \end{equation*}
 We assert that the vertical coboundary $d_v  f_{eq}^{p,q-1}$ of this
 equivariant cochain is the equivariant vertical cocycle $f_{eq}^{p,q}$.
 Indeed, since the differential $d_v$ is equivariant, the vertical
 coboundary of $f_{eq}^{p,q-1}$ computes to
 \begin{eqnarray*}
  d_v f_{eq}^{p,q-1} (\vec{x},\vec{y}) & = & 
  x_0 . \left[ d_v f^{p,q-1} (x_0^{-1} . \vec{x} , x_0^{-1} . \vec{y}) \right]
  \\
& = & 
  x_0 . \left[f_{eq}^{p,q} (x_0^{-1} . \vec{x} , x_0^{-1} . \vec{y}) \right] = 
  f_{eq}^{p,q} (\vec{x} , \vec{y}) \, .
 \end{eqnarray*}
 Thus every equivariant vertical cocycle $f_{eq}^{p,q}$ is the vertical
 coboundary of an equivariant cochain $f_{eq}^{p,q-1}$ of bidegree
 $(p,q-1)$.
\end{proof}

\begin{corollary}
 If the augmented column complexes
 $A_{c}^{p}(G,A) \hookrightarrow A^{p,*}_{lc}(G ,A)$ are exact, then the
 inclusion
 $j_v^* : C_{c}^* (G,A) \hookrightarrow \tot A_{lc}^{*,*} (G ,A)^G$
 induces an isomorphism in cohomology and the
 cohomologies $H_{lc}^{p} (G,A)$, $H^{p}(\tot A_{lc}^{*,*} (G,A)^G)$ and
 $H_{c} (G,A)^{p}$ are isomorphic.
\end{corollary}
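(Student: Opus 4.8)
The plan is to run the vertical analogue of the argument behind Lemma~\ref{lem:horizontal_isomorphism}, using column exactness as the replacement for the contracting homotopy $h^*$ that is available only in the horizontal direction. First I would invoke Proposition~\ref{keyprop}: the standing hypothesis that the non-equivariant augmented columns $A_{c}^{p}(G,A)\hookrightarrow A_{lc}^{p,*}(G,A)$ are exact yields exactness of the \emph{equivariant} augmented columns $C^p_c(G,A)\hookrightarrow A_{lc}^{p,*}(G,A)^G$. This is the crucial input, and it is precisely here that the remark following Lemma~\ref{lem:horizontal_isomorphism} becomes relevant: in the vertical direction no explicit contracting homotopy is available, since such a formula would violate the continuity constraints defining the elements of $A^{p,q-1}_{lc}(G,A)^G$. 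We must therefore replace the homotopy by honest exactness, and Proposition~\ref{keyprop} is exactly the device that transports the assumed (non-equivariant) exactness into the equivariant subcomplex.

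Next I would apply the standard comparison lemma for first-quadrant double complexes. Concretely, I would adjoin the augmentation as a $(q=-1)$-row, forming the double complex $D^{*,*}$ with $D^{p,-1}:=C_c^p(G,A)$ and $D^{p,q}:=A_{lc}^{p,q}(G,A)^G$ for $q\geq 0$, the vertical map $C_c^p(G,A)\to A_{lc}^{p,0}(G,A)^G$ being the augmentation $j_v$. By the previous step every column of $D^{*,*}$ is exact, and since $D^{*,*}$ is concentrated in a half-plane with finite diagonals, the acyclic assembly lemma gives that $\tot D^{*,*}$ is acyclic. Identifying $\tot D^{*,*}$, up to a shift, with the mapping cone of $j_v^*$ then shows that $j_v^* \from C_c^*(G,A)\hookrightarrow \tot A_{lc}^{*,*}(G,A)^G$ is a quasi-isomorphism, i.e. $H_c^p(G,A)\cong H^p(\tot A_{lc}^{*,*}(G,A)^G)$.

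Finally, Lemma~\ref{lem:horizontal_isomorphism} already supplies the horizontal identification $H_{lc}^p(G,A)\cong H^p(\tot A_{lc}^{*,*}(G,A)^G)$ induced by $j_h^*$. Composing the two isomorphisms yields the asserted three-way identification of $H_{lc}^p(G,A)$, $H^p(\tot A_{lc}^{*,*}(G,A)^G)$ and $H_c^p(G,A)$.

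I would expect the only genuine subtlety to be the routine bookkeeping in the comparison lemma, namely the signs in the total differential and the boundedness needed so that acyclic assembly applies in each total degree. The actual mathematical content, the passage from non-equivariant to equivariant column exactness, is carried entirely by Proposition~\ref{keyprop}; once that is in hand the corollary is a formal consequence of standard homological algebra, exactly dual to how Lemma~\ref{lem:horizontal_isomorphism} followed from the explicit homotopy~\eqref{eqn:contracting_homotopy}.
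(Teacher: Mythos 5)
Your proposal is correct and matches the argument the paper leaves implicit for this corollary: Proposition \ref{keyprop} converts the hypothesis into exactness of the equivariant augmented columns, the standard acyclic-assembly/mapping-cone comparison for the (augmented) first-quadrant double complex then shows $j_v^*$ is a quasi-isomorphism, and composing with the isomorphism of Lemma \ref{lem:horizontal_isomorphism} identifies $H_{lc}^{p}(G,A)$, $H^{p}(\tot A_{lc}^{*,*}(G,A)^G)$ and $H_{c}^{p}(G,A)$. This is essentially the same approach as the paper's.
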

It remains to show that in this case the isomorphism 
$H_{c}^{p} (G,A) \cong H_{lc}^{p} (G,A)$ is actually induced by the inclusion 
$ i^* : C_c^* (G,A) \hookrightarrow C_{lc}^* (G,A)$. 
Here the proof of \cite[Proposition 14.3.8]{F10} carries over almost in 
verbatim, see also \cite[Proposition 2.4]{F11b}:

\begin{proposition} \label{siscohtolc}
The image $j_v^n (f)$ of a homogeneous continuous group $n$-cocycle $f$ on $G$ in 
$\tot A_{lc}^{*,*} (G,A)^G$ is cohomologous to the image $j_h^n i^n  (f)$ of 
the locally continuous homogeneous group $n$-cocycle $i^n (f)$ in 
$\tot A_{lc}^{*,*} (G,A)^G$. 
\end{proposition}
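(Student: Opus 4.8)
The plan is to exhibit an explicit cochain $b$ in degree $n-1$ of the total complex whose total coboundary equals $j_v^n (f) - j_h^n i^n (f)$, so that the two images differ by a boundary. First I would unwind the two augmentations. Since $f$ is a homogeneous group cocycle, $j_v^n (f)$ is the element sitting in bidegree $(n,0)$ given by $(\vec{x},y_0)\mapsto f(\vec{x})$, while $j_h^n i^n (f)$ is the element in bidegree $(0,n)$ given by $(x_0,\vec{y})\mapsto f(\vec{y})$; both are total cocycles precisely because $d f = 0$.

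Next I would write down the ``diagonal'' cochain. For each $p+q = n-1$ set $b^{p,q}(x_0,\ldots,x_p,y_0,\ldots,y_q) := (-1)^{p+1} f(x_0,\ldots,x_p,y_0,\ldots,y_q)$, using that $f$ accepts $(p+1)+(q+1) = n+1$ arguments. Because $f$ is continuous on all of $G^{n+1}$, each $b^{p,q}$ is continuous on the whole of $G^{p+1}\times G^{q+1}$ and hence certainly lies in $A_{lc}^{p,q}(G,A)$; equivariance of $f$ under the diagonal action transfers directly to $b^{p,q}$, so $b := \sum_{p+q=n-1} b^{p,q}$ is a genuine element of $\tot A_{lc}^{*,*}(G,A)^G$ in degree $n-1$.

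The computational heart is evaluating the total differential $D = d_h + d_v$ on $b$, bidegree by bidegree. At an interior bidegree $(p,q)$ with $1\le p\le n-1$ the contribution is $d_h b^{p-1,q} + d_v b^{p,q-1}$; the first is an alternating sum over deleting one of the $x$-variables and the second an alternating sum over deleting one of the $y$-variables, so a priori they live on genuinely different argument tuples. The key step, and the one place the cocycle property is indispensable, is to apply $d f = 0$ to the $n+2$ arguments $(x_0,\ldots,x_p,y_0,\ldots,y_q)$: this converts the $x$-deletion sum into the $y$-deletion sum up to a uniform sign. After this substitution both terms become scalar multiples of one and the same expression, and since the signs $(-1)^{p+1}$ built into $b^{p,q}$ are opposite for consecutive entries along the anti-diagonal $p+q=n-1$, the two contributions cancel. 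At the two corner bidegrees only a single differential survives, and the same use of $d f = 0$ collapses it to $f(\vec{x})$ at $(n,0)$ and to $-f(\vec{y})$ at $(0,n)$, which are exactly $j_v^n (f)$ and $-j_h^n i^n (f)$.

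Hence $D(b) = j_v^n (f) - j_h^n i^n (f)$, proving the two classes cohomologous. I expect the main obstacle to be purely one of bookkeeping: choosing the signs so that the interior anti-diagonal cancellations and the two boundary normalisations hold simultaneously. The conceptual content is carried entirely by the single observation that $d f = 0$ lets one trade horizontal face maps for vertical ones on the common tuple of arguments. Note that no hypothesis on $A$ enters here, the proposition being purely formal; the global continuity of $f$ is used only to guarantee that each $b^{p,q}$ belongs to $A_{lc}^{p,q}(G,A)$.
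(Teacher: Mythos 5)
Your proposal is correct and follows essentially the same route as the paper: the paper likewise takes the ``diagonal'' cochain $\psi^{p,q}(\vec{x},\vec{y})=(-1)^p f(\vec{x},\vec{y})$ along the anti-diagonal $p+q=n-1$ (differing from your $b^{p,q}$ only by an overall sign convention) and uses the cocycle identity $df=0$ to trade the $x$-deletion sum for the $y$-deletion sum, so that the total coboundary telescopes to $j_v^n(f)-j_h^n i^n(f)$. Your sign bookkeeping with $(-1)^{p+1}$ and the built-in factor $(-1)^p$ in $d_v$ checks out, and your closing observation that the argument is purely formal and uses no hypothesis on $A$ is accurate.
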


\begin{proof}
 Consider a continuous homogeneous group $n$-cocycle $f\from G^{n+1}\to A$ on
 $G$ and define for all $p+q=n-1$ equivariant cochains 
 $\psi^{p,q}\from G^{p+1}\times G^{q+1}\cong G^{n+1}\to A$ in
 $A^{p,q}_{lc}(G,A)$ via
 $\psi^{p,q} ( \vec{x},\vec{y})= (-1)^p f (\vec{x},\vec{y})$. The
 vertical coboundary of the cochain $\psi^{p,q}$ is given by
 \begin{eqnarray*}
  [d_v \psi^{p,q}] (\vec{x},y_0,\ldots,y_{q+1}) & = & (-1 )^p
  \sum (-1)^i f (\vec{x},y_0,\ldots,\hat{y}_i,\ldots,y_q) \\
  & = & - \sum (-1)^{p+1+i} f ( x_0,\ldots,\hat{x}_i, \ldots,
  x_p,\vec{y}) \\
  & = & [d_h \psi^{p-1,q+1}](x_{0},...,x_{p},\vec{y}).
 \end{eqnarray*}
 The anti-commutativity of the horizontal and the vertical differential
 ensures that the coboundary of the cochain
 $\sum_{p+q=n-1} (-1)^p \psi^{p,q}$
in the total complex % $\tot  A_{lc}^{*,*} (G,A)^G$     
 is the cochain $j_v^n (f) - j_h^n i^n (f)$. Thus the cocycles $j^n (f)$
 and $j_h^n i^n (f)$ are cohomologous in $\tot A_{lc}^{*,*} (G,A)^G$.
\end{proof}

\begin{corollary}
 The homomorphism
 \begin{equation*}
 H (j_h^{p})^{-1} H (j_v^{p}): H_c^{p} (G,A) \rightarrow H_{lc}^{p} (G,A)
 \end{equation*}
 is induced by the inclusion
 $C_c^* (G,A) \hookrightarrow C_{lc}^* (G,A)$.
\end{corollary}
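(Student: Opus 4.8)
The plan is to prove the asserted identity of homomorphisms by a short diagram chase on cohomology, reducing everything to the three facts already in hand: that $H(j_h^p)$ is invertible (Lemma~\ref{lem:horizontal_isomorphism}), that $H(j_v^p)$ is defined and an isomorphism (the corollary to Proposition~\ref{keyprop}), and the cocycle-level comparison of Proposition~\ref{siscohtolc}. The composite $H(j_h^p)^{-1}\circ H(j_v^p)$ is a priori a well-defined homomorphism $H_c^p(G,A)\to H_{lc}^p(G,A)$; it remains only to check that it agrees with the map $H(i^p)$ induced by the inclusion $i^*\from C_c^*(G,A)\hookrightarrow C_{lc}^*(G,A)$.

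Concretely, I would fix a class in $H_c^p(G,A)$ and represent it by a continuous homogeneous group $p$-cocycle $f$. Since $j_v^*$ is a morphism of cochain complexes we have $H(j_v^p)[f]=[j_v^p(f)]$ in $H^p(\tot A_{lc}^{*,*}(G,A)^G)$. Proposition~\ref{siscohtolc} then tells us that $j_v^p(f)$ and $j_h^p i^p(f)$ are cohomologous in the total complex, so $[j_v^p(f)]=[j_h^p i^p(f)]$. Because $i^*$ is a chain map, $i^p(f)$ is a locally continuous cocycle and defines a class $[i^p(f)]\in H_{lc}^p(G,A)$; because $j_h^*$ is a chain map, $H(j_h^p)[i^p(f)]=[j_h^p i^p(f)]$. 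Applying the isomorphism $H(j_h^p)^{-1}$ therefore yields
\begin{equation*}
 H(j_h^p)^{-1}H(j_v^p)[f]=H(j_h^p)^{-1}[j_h^p i^p(f)]=[i^p(f)]=H(i^p)[f],
\end{equation*}
which is exactly the assertion.

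I do not expect a genuine obstacle here: the substantive content has been isolated in Proposition~\ref{siscohtolc}, and the present statement is a formal consequence of the invertibility of $H(j_h^p)$, which is precisely what lets one ``solve'' for $[i^p(f)]$. The only points requiring (routine) care are bookkeeping ones — that each of $j_v^*$, $j_h^*$ and $i^*$ is an honest morphism of cochain complexes, so that all the displayed equalities are independent of the chosen representative $f$ and the construction descends unambiguously to cohomology.
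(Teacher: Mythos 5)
Your proof is correct and matches the paper's approach: the paper states this corollary without a written proof precisely because it is the formal consequence of Proposition~\ref{siscohtolc} (the cohomologousness of $j_v^p(f)$ and $j_h^p i^p(f)$) together with the invertibility of $H(j_h^p)$ from Lemma~\ref{lem:horizontal_isomorphism}, which is exactly the diagram chase you spell out. Your added care about $j_v^*$, $j_h^*$, $i^*$ being chain maps (so that everything descends to cohomology) is the right bookkeeping and is implicit in the paper's setup.
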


Recalling the exactness condition on the augmented columns of the double
complex $A^{*,*}_{lc} (G,A)$ we have shown:

\begin{theorem}
If $G$ is a topological group, $A$ a topological $G$-module and the augmented columns 
$A_c^* (G,A) \hookrightarrow A^{p,*}_{lc} (G,A)$
of the double complex $A^{*,*}_{lc} (G,A)$ are exact, then the inclusion 
$C^*_c (G,A) \hookrightarrow C_{lc}^* (G,A)$ induces an isomorphism 
in cohomology.
\end{theorem}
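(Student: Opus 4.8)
The plan is to compare $C_c^*(G,A)$ and $C_{lc}^*(G,A)$ not directly but through the total complex $\tot A_{lc}^{*,*}(G,A)^G$, using the two augmentation inclusions $j_h^*$ and $j_v^*$ as bridges. The whole argument reduces to showing that \emph{both} of these inclusions are cohomology isomorphisms and that the resulting comparison map is the one induced by $i^*$.

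First I would record that $j_h^*\from C_{lc}^*(G,A)\hookrightarrow \tot A_{lc}^{*,*}(G,A)^G$ always induces an isomorphism in cohomology; this is Lemma~\ref{lem:horizontal_isomorphism}, obtained from the explicit row contracting homotopy $h^*$ and requiring no exactness hypothesis. Next I would feed in the standing assumption that the augmented columns $A_c^p(G,A)\hookrightarrow A_{lc}^{p,*}(G,A)$ are exact. By Proposition~\ref{keyprop} this exactness descends to the equivariant subcomplexes, so the augmented columns $C_c^p(G,A)\hookrightarrow A_{lc}^{p,*}(G,A)^G$ are exact as well; the Corollary following Proposition~\ref{keyprop} then upgrades this to the statement that $j_v^*\from C_c^*(G,A)\hookrightarrow \tot A_{lc}^{*,*}(G,A)^G$ induces an isomorphism in cohomology.

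With $H(j_h^p)$ and $H(j_v^p)$ both isomorphisms, the composite
\begin{equation*}
 H(j_h^p)^{-1}\,H(j_v^p)\from H_c^p(G,A)\to H_{lc}^p(G,A)
\end{equation*}
is automatically an isomorphism, and the last task is to identify it with $H(i^p)$. This is exactly Proposition~\ref{siscohtolc} together with its Corollary: for a continuous cocycle $f$ the total cocycles $j_v^n(f)$ and $j_h^n i^n(f)$ differ by the coboundary of the explicit chain $\sum_{p+q=n-1}(-1)^p\psi^{p,q}$, so $H(j_h^p)^{-1}H(j_v^p)=H(i^p)$. Combining the three displayed facts gives that $i^*\from C_c^*(G,A)\hookrightarrow C_{lc}^*(G,A)$ induces an isomorphism in cohomology.

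I expect the only genuinely nontrivial ingredient to be the cohomology isomorphism for $j_v^*$, whose substance is hidden in Proposition~\ref{keyprop}: one must convert a non-equivariant vertical primitive into an equivariant one via the explicit twist $f_{eq}^{p,q-1}(\vec{x},\vec{y})=x_0.f^{p,q-1}(x_0^{-1}.\vec{x},x_0^{-1}.\vec{y})$ and check, using that $d_v$ commutes with the $G$-action, that $d_v f_{eq}^{p,q-1}$ returns the prescribed equivariant cocycle. Everything else is the formal bookkeeping of a first-quadrant double complex whose rows and columns are acyclic off the augmentation edges, where both edge complexes necessarily compute the cohomology of the total complex.
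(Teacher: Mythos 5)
Your proposal is correct and follows exactly the paper's own route: the two augmentations $j_h^*$ and $j_v^*$ into $\tot A_{lc}^{*,*}(G,A)^G$ (the first an isomorphism by Lemma~\ref{lem:horizontal_isomorphism}, the second by Proposition~\ref{keyprop} and its corollary under the exactness hypothesis), with Proposition~\ref{siscohtolc} identifying $H(j_h^p)^{-1}H(j_v^p)$ as the map induced by the inclusion. You have also correctly located the one substantive ingredient, the equivariant-twisting argument of Proposition~\ref{keyprop}.
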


We now turn to showing that for loop contractible coefficients $A$ the 
exactness requirement in the above theorem is always satisfied.

\begin{proposition}\label{prop:loop_contractible_coefficients}
 If $A$ is loop contractible, then the augmented column complexes
 $A_{c}^{p}(G,A) \hookrightarrow A^{p,*}_{lc}(G ,A)$ are exact
\end{proposition}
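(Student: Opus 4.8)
The plan is to prove exactness of each augmented column by producing, degree by degree, a primitive that is continuous on a full diagonal neighbourhood, with loop contractibility entering exactly at the one point where the purely simplicial machinery breaks down. Since Proposition~\ref{keyprop} reduces the equivariant statement to this non-equivariant one, it suffices to treat the columns $A_c^p(G,A)\hookrightarrow A^{p,*}_{lc}(G,A)$. Exactness at the augmentation and at bidegree $(p,0)$ is immediate: a vertical $0$-cocycle $g\in A^{p,0}_{lc}(G,A)=C(G^{p+1}\times G,A)$ satisfies $g(\vec x,y_1)=g(\vec x,y_0)$, hence is independent of the $G$-coordinate and thus lies in the image of $A_c^p(G,A)$.

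The essential observation is that an element of $A^{p,q}_{lc}(G,A)$ is an \emph{everywhere} defined function, so the cocycle equation $d_vf=0$ holds on all of $G^{p+1}\times G^{q+2}$, not merely on a diagonal neighbourhood. Consequently the classical bar-resolution contraction is available on the level of all functions: fixing any $b\in G$, the operator
\[
 k_b(f)(\vec x,y_0,\dots,y_{q-1})=(-1)^p f(\vec x,b,y_0,\dots,y_{q-1})
\]
satisfies $d_v k_b+k_b d_v=\mathrm{id}$, so every locally continuous vertical cocycle already admits the everywhere defined primitive $k_b f$. The entire content of the proposition is therefore to replace $k_b f$ by a primitive lying in $A^{p,q-1}_{lc}(G,A)$, i.e.\ continuous on some $\Gamma_V^{q-1}$. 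This is exactly the step that fails for the naive contraction: $k_b f$ is continuous only where the inserted point $b$ is close to every $y_j$, never on a neighbourhood of the thin diagonal, which is the obstruction already noted after Lemma~\ref{lem:horizontal_isomorphism}.

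To repair this I would combine a continuity-preserving degeneracy with the loop contraction. Duplicating an existing coordinate, $s(f)(\vec x,y_0,\dots,y_{q-1})=(-1)^p f(\vec x,y_0,y_0,y_1,\dots,y_{q-1})$, is harmless for the diagonal condition, since $(y_0,y_0,y_1,\dots,y_{q-1})\in\Gamma_U^q$ as soon as $(y_0,\dots,y_{q-1})\in\Gamma_U^{q-1}$; but $s$ is not a contraction: a direct computation gives $d_v s+s\,d_v=R$ for an explicit chain map $R$ with $R(f)(\vec x,y_0,\dots,y_q)=f(\vec x,y_1,y_1,y_2,\dots,y_q)$, which agrees with the identity on the thin diagonal. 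Since $R$ is already null-homotopic through the continuity-preserving $s$, it suffices to show that $\mathrm{id}-R$—a chain map whose value vanishes on the diagonal—is null-homotopic through operators that preserve the local continuity condition. Here loop contractibility is indispensable: the endomorphisms $H_t=H(t,\cdot)$ are continuous homomorphisms with $H_0=\mathrm{id}_A$ and $H_1=0$, so $f\mapsto H_t\circ f$ defines chain endomorphisms $(H_t)_*$ of the column, commuting with $d_v$ because $d_v$ is an integral combination of coordinate evaluations and each $H_t$ is additive, and preserving $A^{p,q}_{lc}(G,A)$ because $H$ is jointly continuous. As $(\mathrm{id}-R)f$ vanishes on the diagonal, feeding it through the contraction $H$ damps it to zero while keeping it continuous on a diagonal neighbourhood, and the homomorphism property keeps the correction compatible with $d_v$; this supplies the missing null-homotopy and hence the contraction $k$ into $A^{p,q-1}_{lc}(G,A)$.

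The main obstacle is to carry out this last step as an explicit, finite construction. No integration over $[0,1]$ and no partition of unity on $G$ are available—consistent with the hypothesis-free statement for $G$—so the null-homotopy of $\mathrm{id}-R$ must be written down directly from $H$, $s$ and the simplicial structure, after which one must verify simultaneously that $d_v k+k\,d_v=\mathrm{id}$ (a sign computation using additivity of the $H_t$ to commute $H$ past the alternating sums) and that $k$ genuinely lands in $A^{p,q-1}_{lc}(G,A)$, where joint continuity of $H$ together with $H_1=0$ must absorb precisely the continuity defect of the base-point insertion. Once such a $k$ is constructed the columns are exact, which is exactly the hypothesis required by Proposition~\ref{keyprop} and hence by the theorems of this section.
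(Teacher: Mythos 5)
Your preparatory steps are correct as far as they go: the degree-zero argument is right, the homotopy formula $d_v s + s\,d_v = R$ with $Rf(\vec x,y_0,\dots,y_q)=f(\vec x,y_1,y_1,y_2,\dots,y_q)$ checks out (the $(-1)^p$ factors cancel), and $s$ does preserve local continuity. But the proof stops exactly where the proposition begins. You yourself isolate the entire content of the statement in the final step --- a null-homotopy of $\mathrm{id}-R$ through operators preserving local continuity --- and then offer only the assertion that ``feeding it through the contraction $H$ damps it to zero while keeping it continuous''. That is not a construction. For each fixed $t$ the map $(H_t)_*$ is a chain map commuting with $d_v$, and the family connects $\mathrm{id}$ to $0$; but a path of chain maps is a homotopy \emph{through} chain maps, not a chain homotopy, and converting the former into the latter is precisely where extra input must enter (integration over $t$, a partition of unity measuring distance to the diagonal, or a genuinely simplicial construction built from $H$). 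You correctly note that the first two are unavailable for a general topological group $G$, and you do not supply the third.

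There is also concrete evidence that the reduction from $\mathrm{id}$ to $\mathrm{id}-R$ cannot carry the weight you place on it: for $q=1$, setting $y_2=y_1$ in $d_vf=0$ gives $f(\vec x,y,y)=0$, so on cocycles $Rf=0$ and $\mathrm{id}-R$ coincides with $\mathrm{id}$. Your reduction is vacuous in degree one, and ``vanishing on the thin diagonal'' is a property vertical cocycles already have there, so it cannot by itself be the lever on which loop contractibility acts; the difficulty lives in germs near the diagonal, not on the diagonal itself. The paper's own proof proceeds quite differently: by \cite[Lemma 3.12]{F11b}, exactness of the augmented column is equivalent to the inclusion $\colim A_c^{p,*}(G,\mathfrak{U}_U;A)\hookrightarrow\colim A^{p,*}(G,\mathfrak{U}_U;A)$ of Alexander--Spanier-type colimit complexes inducing an isomorphism in cohomology, and this comparison is then obtained by adapting the presheaf argument of \cite[Corollary 5.20]{F11a}, which is where loop contractibility actually does its work. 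If you want a self-contained argument, that presheaf-level mechanism is what you would have to reproduce; as written, your proposal has a genuine gap at its central step.
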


\begin{proof}
 We consider for any open identity neighbourhood $U$ in $G$ the open
 neighbourhoods $\mathfrak{U}_U [n]:=\bigcup g.U^{n+1}$ of the diagonal,
 which form a simplicial subspace of $G^{*+1}$. This allows us to
 consider the cosimplicial group
 \begin{equation*}
  A^{p,*} (G,\mathfrak{U}_U ; A) := 
  \left\{ f : G^{p+1} \times \mathfrak{U}_U [*] \rightarrow A 
  \mid \, \forall \vec{g} \in \mathfrak{U}_U [*] : f (\mathinner{-},\vec{g}) \in C (G^{p+1},A) 
  \right\}
 \end{equation*}
 and its cosimplicial sub group
 $A_c^{p,*} (G,\mathfrak{U}_U ; A):= C ( G^{p+1} \times \mathfrak{U}_U [*] ,A)$
 as well as the cochain complex associated to both. By switching
 arguments the cochain group $A^{p,q} (G,\mathfrak{U}_U ; A)$ can be
 identified with the group
 $f : \mathfrak{U}_U [q]\rightarrow C (G^{p+1},A)$ of
 $\mathfrak{U}$-local cochains.
% $f : \mathfrak{U}_U [q]\rightarrow C (G^{p+1},A)$.               
 Taking the colimit over all identity neighbourhoods yields the
% Alexander-Spanier cohomology group
complex of Alexander-Spanier cochains
 \begin{equation*}
  \colim A^{p,*} (G,\mathfrak{U}_U;A) \cong A_{AS}^* (G, C (G^{p+1},A))
 \end{equation*}
 of $G$ with coefficients $C (G^{p+1},A)$. It has been shown in
 \cite[Lemma 3.12]{F11b}\footnote{In the cited manuscript the complex
 $A_{lc}^{p,q} (G,A)$ is denoted by $A_{cg}^{p,q} (G,A)$ and one has to
 consider $G$ acting on itself by left translation.} that the augmented
 complex $A^{p}_c (G,A) \hookrightarrow A_{lc}^{p,*} (G,A)$ is exact if
 and only if the inclusion of colimit complexes
 \begin{equation*}
  \colim A_c^{p,*} (G,\mathfrak{U}_U;A) \hookrightarrow \colim A^{p,*} (G,\mathfrak{U}_U;A),
 \end{equation*}
 where $U$ runs over all open identity neighbourhoods in $G$, induces an
 isomorphism in cohomology.

 The latter can be shown by adapting the construction in \cite{F11a},
 where it is shown that the inclusion
 $A_{AS,c}^* (G,A) \hookrightarrow A_{AS}^* (G,A)$ of the continuous
 into the abstract Alexander-Spanier complex induces an isomorphism in
 cohomology. (For paracompact spaces and vector space coefficients this
 is a well-known fact \cite[Corollary 2.10]{F11a}, see \cite[Corollary
 2.14]{F11a} for not necessarily paracompact $G$.) In the case of loop
 contractible coefficients we replace the Alexander-Spanier presheafs
 $A_c^q (-,A)$ and $A^q (-,A)$ in the proof of \cite[Corollary
 5.20]{F11a} by the presheafs $A_c^{p,q} (G,-;A)$ and $A^{p,q} (G,-;A)$
 given by
 \begin{equation*}
  A^{p,q}_{c} (G,U;A):=  C(G^{p+1} \times U^{q+1},A)
 \end{equation*}
 and
 \begin{equation*}
  A^{p,q} (G,U;A):=  \left\{ f : G^{p+1} \times U^{q+1} \rightarrow A
  \mid \, \forall \vec{g}' \in U^{q+1} : f (-,\vec{g}') \in C (G^{p+1},A) 
  \right\}.
 \end{equation*}
 The arguments leading to \cite[Corollary 5.20]{F11a} carry over to show
 that for each fixed $p$ the inclusion
 $A_{c}^{p,*}(G,\mathfrak{U};A)\hookrightarrow A^{p,*}(G,\mathfrak{U};A)$
 induces an isomorphism in cohomology. We thus obtain the desired isomorphism
 \begin{equation}\label{eqn:cohom_iso}
 H (\colim A_c^{p,*} (G,\mathfrak{U}_U;A)) \cong H (\colim A^{p,*} (X,\mathfrak{U}_U;A)).
 \end{equation}
\end{proof}

\begin{remark}
 In the case that $G$ is locally compact, we can obtain the isomorphism
 \eqref{eqn:cohom_iso} directly from \cite[Corollary 2.14]{F11a}, since
 then $A_{c}^{p,q}(G,U;A)\cong A_{c}^{q}(U,C(G^{p},A))$ and thus the inclusion
 \begin{equation*}
  A_{c}^{p,*}(G,\mathfrak{U};A)\cong A_{c}^{*}(\mathfrak{U},C(G^{p},A))\hookrightarrow A^{p,q}(\mathfrak{U},C(G^{p},A))\cong A^{p,q}(G,\mathfrak{U};A)
 \end{equation*}
 induces an isomorphism in cohomology.
\end{remark}

\begin{corollary}\label{cor1}
 If $G$ is a topological group
 and $A$ is a loop contractible topological $G$-module, then the inclusion
 $C^*_c (G,A) \hookrightarrow C_{lc}^* (G,A)$ induces an isomorphism
 in cohomology.
\end{corollary}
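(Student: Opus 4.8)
The plan is to obtain this corollary by assembling the two results that immediately precede it, so that essentially no new argument is required. The statement is exactly the conclusion of the preceding theorem, restricted to those coefficient modules for which its exactness hypothesis is met; the only task is therefore to verify that hypothesis for loop contractible $A$.

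First I would invoke Proposition \ref{prop:loop_contractible_coefficients}: because $A$ is loop contractible, the augmented column complexes $A_c^p(G,A) \hookrightarrow A_{lc}^{p,*}(G,A)$ are exact for every $p$. This is precisely the standing assumption of the preceding theorem. I would then apply that theorem verbatim to conclude that the inclusion $C_c^*(G,A) \hookrightarrow C_{lc}^*(G,A)$ induces an isomorphism in cohomology.

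The genuine difficulties do not reside in this corollary but in its two ingredients. The abstract homological content --- converting column exactness into the desired isomorphism --- has already been packaged into the double complex $A_{lc}^{*,*}(G,A)^G$: Proposition \ref{keyprop} descends exactness to the equivariant subcomplex, the augmentations $j_h^*$ and $j_v^*$ thereby become quasi-isomorphisms, and Proposition \ref{siscohtolc} identifies the composite $H(j_h^p)^{-1} H(j_v^p)$ with the map induced by the honest inclusion $i^*$. The hard part is the analytic input behind Proposition \ref{prop:loop_contractible_coefficients}, where loop contractibility of $A$ is what allows the continuous-versus-abstract Alexander--Spanier comparison of \cite{F11a} to be pushed through the colimit over identity neighbourhoods. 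Granting that, the present corollary follows at once as a formal combination.
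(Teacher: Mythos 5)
Your proposal is correct and matches the paper's own (implicit) argument exactly: the corollary is stated without proof precisely because it is the immediate combination of Proposition \ref{prop:loop_contractible_coefficients} (loop contractibility gives exactness of the augmented columns) with the preceding theorem (column exactness implies the inclusion $C^*_c(G,A) \hookrightarrow C^*_{lc}(G,A)$ is a quasi-isomorphism). Your additional remarks on where the real work lies --- Propositions \ref{keyprop} and \ref{siscohtolc} for the formal part, and the Alexander--Spanier comparison for the analytic part --- accurately reflect the paper's structure.
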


\section{The compactly generated case}
\label{sect:the_compactly_generated_case}

\begin{tabsection}
 There exists another version of locally continuous cochains if one works
 solely in the category $\ktop$ of $k$-spaces \cite[\S
 7.9]{tomDieck08Algebraic-topology}, \cite[\S 2.4]{Hovey99Model-categories}.
 First of all this leads to a different notion of topological group, where one
 requires the group multiplication $G\times G\to G$ to be continuous in the
 $k$-topology, which is in general finer than the
 product topology. Moreover, this also affects the notion of $G$-module, where
 one also requires the action map $G\times A\to A$ to be continuous with
 respect to the $k$-topology. We then call $G$ a
 $k$-group and $A$ a $k$-module for $G$.
\end{tabsection}

The `k-ification' of topological spaces is a functor 
$\fktop : \tops \rightarrow \ktop$. 
Working in $\ktop$ one can still define continuous and `locally continuous'  
cochains by using products in the category $\ktop$ instead of $\tops$ 
(cf. \cite[Section 4]{F11b}):
\begin{eqnarray*}
C_{c,k}^q ( G , A ) & := & C ( \fktop G^{q+1} , A)^G \\
  C_{lc,k}^q ( G , A ) & := & \left\{ f \in A^q (G,A)^G \mid \; 
\exists U \in \mathcal{U}_1 \, : \left.f\right|_{\fktop \Gamma_{U}^{q}} \; 
\text{is continuous} \right\} 
\end{eqnarray*}
These subcomplexes of the standard complex lead to generally different 
versions of cohomology groups $H^{n}_{c,k}(G,A)$ and $H^{n}_{lc,k}(G,A)$
respectively, and the inclusion $ C^{*}_{c,k}(G,A)\to C^{*}_{lc,k}(G,A)$
induces a homomorphism $H_{c,k}(G,A)\to H_{lc,k}(G,A)$.
Likewise (cf. \cite[Section 4]{F11b}) one can define another bicomplex
 \begin{equation*}
  A_{lc,k}^{p,q} ( G , A ) := \left\{ f:G^{p+1} \times G^{q+1} \rightarrow G
  \mid \, \exists U \in \mathcal{U}_1 : \left.f\right|_{\fktop G^{p+1}
    \times_k \fktop \Gamma_{U}^{q}} \; \text{is continuous} \right\} \, ,
 \end{equation*}
 where $\fktop G^{p+1} \times_k \fktop \Gamma_{U}^{q}$ is the product in
 $\ktop$. As observed in \cite[Section 4]{F11b}) the rows an columns of the 
double complex $A_{lc,k}^{p,q} ( G , A )^G$ can be augmented by the complexes 
$C_{lc,k}^* (G,A)$ and $C_{lc,k}^* (G,A)$ respectively. The proof of the preceding
two sections carry over to yield the following results.

\begin{lemma}(cf.\ Lemma \ref{lem:horizontal_isomorphism})
The inclusion 
$j_{h,k}^{*}: C_{lc,k}^{*} (G,A) \hookrightarrow \tot A_{lc,k}^{*,*} (G,A)^G$ 
induces an isomorphism in cohomology. 
\end{lemma}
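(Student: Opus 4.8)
The plan is to mimic the proof of Lemma \ref{lem:horizontal_isomorphism} from the first section, transporting the contracting-homotopy argument verbatim into the $k$-space setting. Recall that in the topological case we augmented each row $A_{lc}^{*,q}(G,A)^G$ of the double complex by the locally continuous cochains $C_{lc}^q(G,A)$ and exhibited an explicit contracting homotopy $h^*$ (equation \eqref{eqn:contracting_homotopy}), given on the level of cochains by inserting $y_0$ as an extra horizontal argument, $h^p(f)(x_0,\ldots,x_{p-1},\vec{y}) = (-1)^p f(x_0,\ldots,x_{p-1},y_0,\vec{y})$. The formula is purely algebraic and does not reference the topology on $G$, so the same formula defines maps $h_k^p\from A_{lc,k}^{p,q}(G,A)^G \to A_{lc,k}^{p-1,q}(G,A)^G$ in the $k$-space double complex.

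The first step is therefore to check that $h_k^*$ really lands in the $k$-ified cochain groups, i.e. that if $f$ is continuous on $\fktop G^{p+1}\times_k \fktop\Gamma_U^q$ for some identity neighbourhood $U$, then $h_k^p(f)$ is continuous on $\fktop G^{p}\times_k \fktop\Gamma_U^q$. Since $h_k^p(f)$ is obtained from $f$ by precomposing with the continuous map that duplicates and reinserts coordinates (sending $(x_0,\ldots,x_{p-1},\vec{y})$ to $(x_0,\ldots,x_{p-1},y_0,\vec{y})$), and since projections and diagonals are $k$-continuous, this is a routine verification using the functoriality of $\fktop$ and the defining universal property of the $k$-product $\times_k$. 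The equivariance of $h_k^p$ is immediate from the $G$-equivariance of the same reinsertion map under the diagonal action.

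The second step is to verify the contracting-homotopy identity $d_{h,k}\, h_k + h_k\, d_{h,k} = \mathrm{id}$ on the augmented rows, with the augmentation term $j_{h,k}$ absorbed appropriately at the bottom. This is again the identical alternating-sum computation carried out in the references \cite[Lemmata 2.1, 2.3, 2.8]{F11b} and \cite[Proposition 14.3.3]{F10}: one expands $d_{h,k}$ as the alternating sum over omitted horizontal arguments and checks that reinserting $y_0$ and then omitting arguments telescopes to the identity. No continuity hypothesis enters this bookkeeping, so the identity that held in the product-topology rows holds unchanged in the $k$-topology rows. Having an explicit contracting homotopy on each augmented row shows that each row $C_{lc,k}^q(G,A)\hookrightarrow A_{lc,k}^{*,q}(G,A)^G$ is exact, whence the augmentation $j_{h,k}^*$ is a quasi-isomorphism onto the total complex by the standard double-complex spectral-sequence (or acyclic-assembly) argument exactly as in the proof of Lemma \ref{lem:horizontal_isomorphism}.

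The only genuine obstacle is the point raised in the first step: one must be careful that the $k$-product $\times_k$ interacts correctly with the reinsertion map and that $k$-continuity of $f$ on $\fktop G^{p+1}\times_k\fktop\Gamma_U^q$ is preserved under it. In the category $\ktop$ this is harmless because $\fktop$ is a right adjoint (it preserves limits such as the $k$-product) and the coordinate-reinsertion map is built from $k$-continuous projections and diagonal maps; but it is the one place where the $k$-topology must be invoked rather than ignored. Once this compatibility is recorded, the rest of the argument is formally identical to the topological case and the lemma follows.
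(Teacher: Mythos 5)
Your proof is correct and takes essentially the same route as the paper, which simply observes that the contracting homotopy \eqref{eqn:contracting_homotopy} on the augmented rows carries over verbatim to the $k$-space setting (referring to \cite[Section 4]{F11b} for details). Your extra step---checking that the coordinate-reinsertion map is continuous for the $k$-products via the right-adjointness of $\fktop$ and the universal property of $\times_k$---is precisely the point the paper leaves implicit, not a departure from its argument.
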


\begin{proposition}(cf.\ Proposition \ref{keyprop})
If the augmented column complexes 
$A_{c,k}^p (G ,A) \hookrightarrow A_{lc,k}^{p,*}(G,A)$ are exact, then the 
augmented sub column complexes 
$C_{c,k}^p (X;V)^G \hookrightarrow A_{lc,k}^{p,*}(X;V)^G$ are exact as well.
\end{proposition}

\begin{proposition}(cf.\ Proposition \ref{siscohtolc}) % \label{siscohtolsk}
The image $j_{k,v}^n (f)$ of a homogeneous continuous group $n$-cocycle $f$ on 
$G$ in $\tot A_{lc, k}^{*,*} (G,A)^G$ is cohomologous to the image 
$j_{h,k}^n i^n  (f)$ of the locally continuous homogeneous group $n$-cocycle 
$i^n (f)$ in $\tot A_{lc, k}^{*,*} (G,A)^G$.
\end{proposition}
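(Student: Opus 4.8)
The plan is to reproduce the proof of Proposition~\ref{siscohtolc} in the $k$-space setting, since its algebraic core is insensitive to the topology. Given a continuous homogeneous group $n$-cocycle $f\in C_{c,k}^{n}(G,A)=C(\fktop G^{n+1},A)^{G}$, I would define for all $p+q=n-1$ the equivariant cochains $\psi^{p,q}\from G^{p+1}\times G^{q+1}\cong G^{n+1}\to A$ by $\psi^{p,q}(\vec{x},\vec{y})=(-1)^{p}f(\vec{x},\vec{y})$, exactly as before. The combinatorial heart of the argument---computing $d_{v}\psi^{p,q}=d_{h}\psi^{p-1,q+1}$ and deducing from the anti-commutativity of the two differentials that $\sum_{p+q=n-1}\psi^{p,q}$ has total coboundary $j_{k,v}^{n}(f)-j_{h,k}^{n}i^{n}(f)$---is a purely algebraic manipulation of alternating sums that never refers to the topology on $G$, and therefore transfers without change to the groups $A_{lc,k}^{p,q}(G,A)^{G}$.

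The one point that genuinely requires the $k$-space hypotheses is the claim that each $\psi^{p,q}$ actually lies in $A_{lc,k}^{p,q}(G,A)$, i.e.\ that its restriction to $\fktop G^{p+1}\times_{k}\fktop\Gamma_{U}^{q}$ is continuous for some (in fact every) $U\in\mathcal{U}_{1}$. Here I would use that $f$ is continuous on $\fktop G^{n+1}=\fktop(G^{p+1}\times G^{q+1})$, so it suffices to produce a continuous map $\fktop G^{p+1}\times_{k}\fktop\Gamma_{U}^{q}\to\fktop G^{n+1}$ through which $\psi^{p,q}$ factors. Such a map is furnished by the $k$-ification adjunction: the two $k$-product projections, composed with the canonical continuous maps $\fktop G^{p+1}\to G^{p+1}$ and $\fktop\Gamma_{U}^{q}\to\Gamma_{U}^{q}\hookrightarrow G^{q+1}$, assemble via the universal property of the product topology into a continuous map $\fktop G^{p+1}\times_{k}\fktop\Gamma_{U}^{q}\to G^{p+1}\times G^{q+1}$; since the source is a $k$-space, this corresponds by adjunction to a continuous map into $\fktop(G^{p+1}\times G^{q+1})=\fktop G^{n+1}$. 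Composing with $f$ then shows $\psi^{p,q}\in A_{c,k}^{p,q}(G,A)\subseteq A_{lc,k}^{p,q}(G,A)$, and its equivariance is inherited directly from that of $f$.

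I expect this continuity verification to be the only real obstacle, because it is precisely the place where the finer $k$-topology could a priori obstruct the reshuffling of arguments along $G^{n+1}\cong G^{p+1}\times G^{q+1}$ that underlies the definition of $\psi^{p,q}$; the whole point is that the relevant comparison map runs from the $k$-product \emph{into} the $k$-ified product in the correct direction, so that continuity on $\fktop G^{n+1}$ pulls back. Once this is in place, the coboundary computation goes through verbatim as in Proposition~\ref{siscohtolc}, yielding that $j_{k,v}^{n}(f)$ and $j_{h,k}^{n}i^{n}(f)$ are cohomologous in $\tot A_{lc,k}^{*,*}(G,A)^{G}$.
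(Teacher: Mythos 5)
Your proposal is correct and takes essentially the same approach as the paper, which offers no separate proof here but simply asserts that the proof of Proposition~\ref{siscohtolc} carries over to the $k$-setting — your argument is exactly that carry-over, with the purely combinatorial coboundary computation left untouched. Your explicit check that each $\psi^{p,q}$ is continuous on $\fktop G^{p+1}\times_{k}\fktop\Gamma_{U}^{q}$, using that a continuous map from a $k$-space into $G^{n+1}$ factors continuously through $\fktop G^{n+1}$ by the $k$-ification adjunction, is precisely the one point the paper leaves implicit, and you resolve it correctly.
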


\begin{corollary}
 The homomorphism
 \begin{equation*}
 H (j_{h,k}^{p})^{-1} H (j_{v,k}^{p}): H_{c,k}^{p} (G,A) \rightarrow H_{lc,k}^{p} (G,A)
 \end{equation*}
is induced by the inclusion
 $C_{c,k}^* (G,A) \hookrightarrow C_{lc,k}^* (G,A)$.
\end{corollary}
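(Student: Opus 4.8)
The plan is to deduce this corollary formally from the three $k$-space analogues established just above, following verbatim the argument that yielded the corresponding statement in the product-topology setting. No genuinely new computation is required; everything reduces to cohomological bookkeeping in the total complex $\tot A_{lc,k}^{*,*}(G,A)^{G}$, and the corollary is the exact $k$-space counterpart of the Corollary following Proposition~\ref{siscohtolc}.

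Concretely, I would fix a class $[f]\in H_{c,k}^{p}(G,A)$ represented by a homogeneous continuous group $p$-cocycle $f$. The $k$-space version of Proposition~\ref{siscohtolc} asserts that $j_{v,k}^{p}(f)$ and $j_{h,k}^{p}i^{p}(f)$ are cohomologous in $\tot A_{lc,k}^{*,*}(G,A)^{G}$, so that on the level of cohomology
\begin{equation*}
 H(j_{v,k}^{p})[f]=H(j_{h,k}^{p})\,H(i^{p})[f].
\end{equation*}
By the $k$-space analogue of Lemma~\ref{lem:horizontal_isomorphism} the map $j_{h,k}^{*}$ induces an isomorphism in cohomology; hence $H(j_{h,k}^{p})$ is invertible, and precomposing the displayed identity with $H(j_{h,k}^{p})^{-1}$ gives $H(j_{h,k}^{p})^{-1}H(j_{v,k}^{p})[f]=H(i^{p})[f]$. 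As $[f]$ was arbitrary, the composite $H(j_{h,k}^{p})^{-1}H(j_{v,k}^{p})$ agrees with the map $H(i^{p})$ induced by the inclusion $C_{c,k}^{*}(G,A)\hookrightarrow C_{lc,k}^{*}(G,A)$, which is precisely the claim. Note that only the invertibility of $H(j_{h,k}^{p})$ is used, so the $k$-space lemma alone suffices to make the composite well defined.

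The real work, and hence the main obstacle, lies not in this final step but in the three preceding $k$-space propositions on which it rests. In particular, one must re-examine the contracting row homotopy of \eqref{eqn:contracting_homotopy} and the equivariantisation argument of Proposition~\ref{keyprop} after the product topology is replaced by the $k$-product, since continuity is now tested on $\fktop G^{p+1}\times_{k}\fktop\Gamma_{U}^{q}$ rather than on $G^{p+1}\times\Gamma_{U}^{q}$. The cochain-level formulas are identical, but one should verify that $k$-ification does not disturb the continuity of the homotopy operator nor of the equivariant correction $x_{0}.f(x_{0}^{-1}.\vec{x},x_{0}^{-1}.\vec{y})$ produced in Proposition~\ref{keyprop}. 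Once those analogues are granted, the present corollary follows with no further input.
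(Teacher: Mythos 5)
Your argument is correct and is exactly the paper's (implicit) reasoning: the corollary follows formally by combining the $k$-space analogue of Proposition~\ref{siscohtolc} (which makes $j_{v,k}^{p}(f)$ and $j_{h,k}^{p}i^{p}(f)$ cohomologous in $\tot A_{lc,k}^{*,*}(G,A)^G$) with the $k$-space analogue of Lemma~\ref{lem:horizontal_isomorphism} (which makes $H(j_{h,k}^{p})$ invertible). The paper offers no separate proof beyond noting that the arguments of the earlier sections carry over, which is precisely what you have spelled out.
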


\begin{corollary}
 If the augmented column complexes 
$A_{c, k}^{p}(G,A) \hookrightarrow A^{p,*}_{lc, k}(G ,A)$ are exact, then the 
inclusion 
$C_{c, k}^* (G,A) \hookrightarrow C_{lc, k}^{*,*} (G ,A)$ 
induces an isomorphism in cohomology.
\end{corollary}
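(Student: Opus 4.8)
The plan is to reproduce, in the category $\ktop$, the argument that established the corresponding statement for the product topology (the Theorem of Section~\ref{sect:continuous_and_locally_continuous_group_cohomology}), assembling the $k$-space analogues of Lemma~\ref{lem:horizontal_isomorphism}, Proposition~\ref{keyprop} and Proposition~\ref{siscohtolc} that immediately precede this corollary. The double complex $A_{lc,k}^{*,*}(G,A)^G$ carries two augmentations: its rows are augmented by $C_{lc,k}^*(G,A)$ via $j_{h,k}^*$, and its columns by $C_{c,k}^*(G,A)$ via $j_{v,k}^*$. The strategy is to show that \emph{both} augmentation maps induce isomorphisms onto the cohomology of the total complex $\tot A_{lc,k}^{*,*}(G,A)^G$, and then to identify the resulting comparison map with the one induced by the inclusion $i^*\from C_{c,k}^*(G,A)\hookrightarrow C_{lc,k}^*(G,A)$.

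First I would invoke the $k$-space analogue of Lemma~\ref{lem:horizontal_isomorphism}: the rows are exact (the contracting homotopy of \eqref{eqn:contracting_homotopy} transports to the $k$-setting), so $j_{h,k}^*$ induces an isomorphism in cohomology; this step uses nothing beyond the structure of the double complex. Next I would bring in the hypothesis: the exactness of the augmented columns $A_{c,k}^p(G,A)\hookrightarrow A_{lc,k}^{p,*}(G,A)$ feeds into the $k$-space analogue of Proposition~\ref{keyprop}, which upgrades it to exactness of the \emph{equivariant} augmented columns $C_{c,k}^p(G,A)\hookrightarrow A_{lc,k}^{p,*}(G,A)^G$. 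By the standard fact that a first-quadrant double complex with exact augmented columns has total cohomology isomorphic to that of the augmenting complex, this yields that $j_{v,k}^*$ also induces an isomorphism in cohomology. Finally, the preceding corollary (resting on the $k$-space analogue of Proposition~\ref{siscohtolc}) identifies the composite $H(j_{h,k}^p)^{-1}H(j_{v,k}^p)$ with the map induced by $i^*$; being a composite of isomorphisms, $i^*$ induces an isomorphism in cohomology, as claimed.

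The main obstacle is not the homological bookkeeping—which is literally the same zig-zag as in Section~\ref{sect:continuous_and_locally_continuous_group_cohomology}—but the verification that each of the three constituent statements actually does carry over to $\ktop$, where finite products are formed as $\times_k$ rather than in $\tops$. Concretely, one must check that the contracting homotopy $h^*$ preserves $k$-local continuity (it inserts a $y$-coordinate into the first block, a map built from coordinate projections that is $k$-continuous), that the equivariant lift $f_{eq}^{p,q-1}(\vec x,\vec y)=x_0.f^{p,q-1}(x_0^{-1}.\vec x,x_0^{-1}.\vec y)$ of Proposition~\ref{keyprop} remains $k$-locally continuous—here one needs the action $G\times A\to A$ and the multiplication $G\times G\to G$ to be continuous for the $k$-topology, which is exactly the definition of a $k$-group and a $k$-module—and that the comparison cochain $\sum_{p+q=n-1}(-1)^p\psi^{p,q}$ of Proposition~\ref{siscohtolc} lies in the $k$-version of the total complex. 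Since $\ktop$ is cartesian closed and $k$-ification preserves the relevant finite products, all of these maps stay within the appropriate $k$-continuous cochain groups; the detailed point-set checks are those carried out in \cite[Section~4]{F11b}. I would therefore expect the write-up to consist mainly of citing those verifications and then stringing together the two isomorphisms.
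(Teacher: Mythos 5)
Your proposal is correct and follows essentially the same route as the paper: the corollary is obtained by stringing together the $k$-space analogues of Lemma~\ref{lem:horizontal_isomorphism}, Proposition~\ref{keyprop} (applied to the hypothesis on the augmented columns) and Proposition~\ref{siscohtolc}, exactly as in the theorem of Section~\ref{sect:continuous_and_locally_continuous_group_cohomology}, with the point-set verifications delegated to \cite[Section~4]{F11b}. Your additional remarks on why the contracting homotopy and the equivariant lift remain $k$-continuous make explicit what the paper only asserts with ``the proofs of the preceding two sections carry over.''
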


\begin{tabsection}
 Up to here the procedure is exactly the same as in the Section
 \ref{sect:continuous_and_locally_continuous_group_cohomology}. However,
 we have to restrict the setting slightly for the main result of this
 sections that we now restrict to.
\end{tabsection}

\begin{proposition}
 If $G$ is a $k$-group whose finite products are $k$-spaces for the
 product topology and $A$ is a loop contractible continuous $G$-module,
 then $C^*_{c,k} (G,A) \hookrightarrow C_{lc,k}^* (G,A)$ induces an
 isomorphism $H_{c,k}^{p} (G,A) \cong H_{lc,k}^{p} (G,A)$ in cohomology.
\end{proposition}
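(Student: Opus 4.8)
The plan is to reduce the statement to the topological group case settled in Corollary~\ref{cor1}, using the hypothesis that the finite products of $G$ already carry the $k$-topology. Under this assumption the $k$-ification functor $\fktop$ acts trivially on every space that enters the bicomplex $A_{lc,k}^{*,*}(G,A)$, so the whole $k$-theoretic apparatus built in this section collapses onto the ordinary one. The only ingredient still missing for the preceding Corollary is the exactness of the augmented columns, and this I would obtain by transporting Proposition~\ref{prop:loop_contractible_coefficients} across the identification.

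First I would make the topological identifications precise. Since each finite product $G^{n}$ is a $k$-space we have $\fktop G^{n}=G^{n}$. Moreover $\Gamma_{U}^{q}$ is open in $G^{q+1}$ and $G^{p+1}\times\Gamma_{U}^{q}$ is open in $G^{p+q+2}$; as open subspaces of $k$-spaces are again $k$-spaces, both are $k$-spaces for the product topology. Using the idempotence of $\fktop$ and the definition of the $k$-product this gives
\[
 \fktop G^{p+1}\times_{k}\fktop\Gamma_{U}^{q}
 =\fktop\bigl(G^{p+1}\times\Gamma_{U}^{q}\bigr)
 =G^{p+1}\times\Gamma_{U}^{q},
\]
the rightmost space carrying the ordinary product topology. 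Hence the continuity condition defining $A_{lc,k}^{p,q}(G,A)$ is verbatim the one defining $A_{lc}^{p,q}(G,A)$, and likewise $A_{c,k}^{p}(G,A)=A_{c}^{p}(G,A)$, $C_{c,k}^{*}(G,A)=C_{c}^{*}(G,A)$ and $C_{lc,k}^{*}(G,A)=C_{lc}^{*}(G,A)$. In particular the two bicomplexes, their equivariant subcomplexes and all augmentation maps coincide.

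With these identifications the argument closes quickly. The non-equivariant augmented columns $A_{c,k}^{p}(G,A)\hookrightarrow A_{lc,k}^{p,*}(G,A)$ involve only the coefficient group $A$ as a topological abelian group together with continuity on finite products of $G$ and on the diagonal neighbourhoods $\Gamma_{U}^{q}$; by the preceding paragraph they are literally the columns $A_{c}^{p}(G,A)\hookrightarrow A_{lc}^{p,*}(G,A)$, which are exact for the loop contractible $A$ by Proposition~\ref{prop:loop_contractible_coefficients}. The preceding Corollary, which already incorporates the $k$-analogues of Propositions~\ref{keyprop} and~\ref{siscohtolc} and hence the $k$-continuity of the module action, then yields that $C_{c,k}^{*}(G,A)\hookrightarrow C_{lc,k}^{*}(G,A)$ induces the asserted isomorphism.

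The step I expect to be the real content is the point-set topology of the first paragraph: one must be certain that the hypothesis forces $\fktop G^{p+1}\times_{k}\fktop\Gamma_{U}^{q}$ to agree with the genuine product $G^{p+1}\times\Gamma_{U}^{q}$. This rests on the standard facts that $\fktop$ fixes $k$-spaces and that open subspaces of $k$-spaces are $k$-spaces, applied to the open diagonal neighbourhoods $\Gamma_{U}^{q}$ and their products with $G^{p+1}$. Once these are in place no genuinely new cohomological analysis beyond Section~\ref{sect:continuous_and_locally_continuous_group_cohomology} is required, the loop contractibility of $A$ being fed in solely through Proposition~\ref{prop:loop_contractible_coefficients}.
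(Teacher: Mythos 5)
Your proposal is correct, but it reaches the decisive step by a genuinely different route than the paper. Both arguments reduce, via the corollary immediately preceding the statement, to proving exactness of the augmented columns $A_{c,k}^{p}(G,A)\hookrightarrow A_{lc,k}^{p,*}(G,A)$. The paper proves this by re-running the proof of Proposition \ref{prop:loop_contractible_coefficients} inside $\ktop$: the hypothesis on finite products is used only to make the product-open diagonal neighbourhoods $\mathfrak{U}_{U}[k]$ cofinal among the $k$-open ones, and the auxiliary inputs are then the $k$-analogues of the cited results of \cite{F11b} and \cite{F11a} (Lemma 5.12 and Corollary 6.20 in place of Lemma 3.12 and Corollary 5.20). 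You instead use the hypothesis to collapse the entire $k$-apparatus onto the topological one: $\fktop$ fixes $k$-spaces, open subspaces of $k$-spaces are $k$-spaces, and the $k$-product of $k$-spaces is the $k$-ification of their topological product, whence $A_{lc,k}^{p,q}(G,A)=A_{lc}^{p,q}(G,A)$, $A_{c,k}^{p}(G,A)=A_{c}^{p}(G,A)$, $C_{c,k}^{*}(G,A)=C_{c}^{*}(G,A)$ and $C_{lc,k}^{*}(G,A)=C_{lc}^{*}(G,A)$, so the required exactness is literally Proposition \ref{prop:loop_contractible_coefficients} (indeed, after the collapse Corollary \ref{cor1} already gives the whole statement, without passing through the $k$-corollary at all). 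Your route is more economical, needing no $k$-analogues of the results of \cite{F11a} and \cite{F11b}, and it makes transparent that under the stated hypothesis the $k$-statement is literally the topological one; the paper's route isolates the weaker consequence it actually uses (cofinality of the diagonal neighbourhoods) and thus shows how one would argue under hypotheses too weak for the full collapse.

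Two points you should make explicit to render the reduction airtight. First, the same identity $\fktop(G\times G)=G\times G$ is what turns the $k$-group $G$ into an honest topological group, and the ``continuous $G$-module'' hypothesis is what makes $A$ an honest topological $G$-module; both are needed before any result of Section \ref{sect:continuous_and_locally_continuous_group_cohomology} may be quoted, and you use them tacitly. Second, the fact that open subspaces of $k$-spaces are $k$-spaces is correct for the test-map definition of $k$-space used in the paper's references \cite[\S 7.9]{tomDieck08Algebraic-topology}, \cite[\S 2.4]{Hovey99Model-categories} (one restricts a test map $K\to X$ to compact neighbourhoods inside the preimage of the open set), but it can fail for the naive definition via intersections with compact subsets on non-Hausdorff spaces; since your whole argument leans on this one point-set fact, the definition in force should be pinned down.
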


\begin{proof}
 By the previous corollary we have to show that the augmented column
 complexes $A_{c, k}^{p}(G,A) \hookrightarrow A^{p,*}_{lc, k}(G ,A)$ are
 exact. The assumption that each $G^{k}$, endowed with the product
 topology, is a $k$-space ensures that the open diagonal neighbourhoods
 $\mathfrak{U}[k]$ are cofinal in the directed system of all open
 diagonal neighbourhoods. With this observation the proof of Proposition
 \ref{prop:loop_contractible_coefficients} of the exactness of
 $A_{c, k}^{p}(G,A) \hookrightarrow A^{p,*}_{lc, k}(G ,A)$ carries over
 to the compactly generated case if one replaces the results from
 \cite{F11b} and \cite{F11a} accordingly.

 In more detail, the cosimplicial group $A^{p,*}(G,\mathfrak{U};A)$ has
 to be replaced with
 \begin{equation*}
  A_k^{p,*} (G,\mathfrak{U};A) := 
  \left\{ f : G^{p+1} \times \mathfrak{U}[*] \rightarrow A 
  \mid \, \forall \vec{g} \in\mathfrak{U}[*] : f (-,\vec{g}) \in C (\fktop G^{p+1},A) 
  \right\},
 \end{equation*}
 \cite[Lemma 3.12]{F11b} has to be replaced with \cite[Lemma 5.12]{F11b}
 and \cite[Corollary 5.20]{F11a} has to be replaced with \cite[Corollary
 6.20]{F11a}.
\end{proof}

\section{Smooth and Locally Smooth Group Cohomology}

\begin{tabsection}
 From now on we assume that $G$ and $A$ are Lie groups and that $A$ is a
 smooth $G$-module\footnote{Manifolds are understood in the general
 infinite dimensional calculus from
 \cite{BertramGlocknerNeeb04Differential-calculus-over-general-base-fields-and-rings}}.
 Replacing the notion of continuity in Section
 \ref{sect:continuous_and_locally_continuous_group_cohomology} by
 smoothness we obtain the complex $C_s^* (G,A)$ of smooth homogeneous
 group cochains and the complex $C_{ls}^* (G,A)$ of homogeneous group
 cochains which are smooth only on some identity neighbourhood in $G$.
 These augment the columns and rows of the double complex
 $A_{ls}^{*,*} (G,A)^G$ given by the $G$-invariants of the double complex
 \begin{equation*}
  A_{ls}^{p,q} ( G , A ) := \left\{ f:G^{p+1} \times G^{q+1} \rightarrow G
  \mid \; \exists U \in \mathcal{U}_1 \, : f_{\mid G^{p+1} \times
  \Gamma_U^q} \; \text{is smooth} \right\}
  \, ,
 \end{equation*}
 We denote the total complex by
 $\tot  A_{ls}^{*,*} (G ,A)^G$. The augmentations of the rows induces a
 morphisms
 $j_h^* : C_{ls}^* (G,A) \hookrightarrow \tot A_{ls}^{*,*} (G ,A)^G$ of
 cochain complexes. Likewise, the augmentations of the columns induces a
 morphism
 $j_v^* : C_s^* (G,A) \hookrightarrow \tot A_{ls}^{*,*} (G ,A)^G$. On
 each row $A_{ls}^{*,q} ( G , A )^G$ one can define the same contracting
 homotopy $h^*$ as in \eqref{eqn:contracting_homotopy} showing
% \begin{eqnarray*}
%  h^p : A^{p,q}_{ls} (G , A)^G & \rightarrow & A^{p-1,q}_{ls} (G ,
%  A)^{G} \\
%  h^p (f)  (x_0,\ldots,x_{p-1},\vec{y}) & = & (-1)^p f
%  (x_0,\ldots,x_{p-1},y_0, \vec{y}) \,
% \end{eqnarray*}
% (cf. the computations in \cite[Lemma 2.7]{F11b}). As a consequence we
% observe the following results for continuous cochains which are in
% correspondence to those proved in the previous section and whose proofs
% are analogous:
\end{tabsection}

\begin{lemma}(cf.\ Lemma \ref{lem:horizontal_isomorphism})
The inclusion $j_h^* : C_{ls}^* (G,A)\hookrightarrow \tot A_{ls}^{*,*}(G,A)^G$ 
induces an isomorphism in cohomology.
\end{lemma}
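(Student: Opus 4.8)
The plan is to carry over, essentially verbatim, the argument that established Lemma~\ref{lem:horizontal_isomorphism} in the continuous setting, since the text has already declared that ``the same contracting homotopy $h^*$ as in \eqref{eqn:contracting_homotopy}'' is to be used. The one point genuinely requiring attention is that this formula still defines a morphism on the \emph{smooth} local cochain groups. So first I would check that
\[
h^p(f)(x_0,\ldots,x_{p-1},\vec{y}) = (-1)^p f(x_0,\ldots,x_{p-1},y_0,\vec{y})
\]
restricts to a well-defined map $A_{ls}^{p,q}(G,A)^G \to A_{ls}^{p-1,q}(G,A)^G$. If $f$ is smooth on $G^{p+1}\times\Gamma_U^q$, then $h^p(f)$ is the composite of $f$ with the coordinate-duplication map $(x_0,\ldots,x_{p-1},\vec{y})\mapsto(x_0,\ldots,x_{p-1},y_0,\vec{y})$, which inserts a copy of $y_0$ as the last entry of the first block. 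This map is smooth in the infinite-dimensional calculus being used and sends $G^p\times\Gamma_U^q$ into $G^{p+1}\times\Gamma_U^q$, so $h^p(f)$ is smooth on $G^p\times\Gamma_U^q$ with the \emph{same} identity neighbourhood $U$. This is exactly the asymmetry exploited in the continuous case: smoothness of the elements of $A_{ls}^{p,q}$ is demanded only on the full first factor $G^{p+1}$, so enlarging the first block of arguments never disturbs the locality condition, in contrast to the vertical direction (cf.\ the remark following Lemma~\ref{lem:horizontal_isomorphism}).

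Next I would verify that $h^p$ preserves $G$-equivariance and satisfies the contraction identity $d_h^{p-1,q}h^p + h^{p+1}d_h^{p,q} = \mathrm{id}$ for $p\geq 1$, together with the matching boundary relation at the augmentation term $C_{ls}^q(G,A)$. Equivariance is immediate because inserting $y_0$ commutes with the diagonal $G$-action. The homotopy identity is the standard simplicial computation for an extra degeneracy (a cone contraction): the sign $(-1)^p$ in \eqref{eqn:contracting_homotopy} is precisely what makes the alternating-sum face differential $d_h$ telescope. These are formally identical to the computations cited for the continuous case in \cite[Lemmata 2.1, 2.3, 2.8]{F11b} and \cite[Proposition 14.3.3]{F10}, so I would simply reproduce them with ``continuous'' replaced by ``smooth''.

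Having established that $h^*$ contracts each augmented row, I conclude that every augmented row $C_{ls}^q(G,A)\to A_{ls}^{*,q}(G,A)^G$ is exact. Feeding this into the standard acyclic-assembly argument for first-quadrant double complexes — the same one underlying Lemma~\ref{lem:horizontal_isomorphism} — then yields that the row augmentation $j_h^*\colon C_{ls}^*(G,A)\hookrightarrow \tot A_{ls}^{*,*}(G,A)^G$ is a quasi-isomorphism, which is the assertion.

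I do not expect any serious obstacle here: the whole content of the lemma is that ``smooth'' behaves exactly as ``continuous'' for this particular homotopy. The only thing to be careful about is confirming that the coordinate-duplication insertion of $y_0$ is genuinely smooth in the relevant calculus and that it leaves the identity neighbourhood $U$ untouched. Since this insertion is manifestly smooth and affects only the unrestricted first factor, the verification is routine, and the proof reduces to transporting the corresponding continuous computations to the smooth category.
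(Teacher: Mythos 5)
Your proposal is correct and follows exactly the paper's route: the paper proves this lemma by declaring that the contracting homotopy $h^*$ from \eqref{eqn:contracting_homotopy} works verbatim on the rows of $A_{ls}^{*,*}(G,A)^G$, and then invoking the same double-complex argument as in Lemma \ref{lem:horizontal_isomorphism}. Your added verification that the insertion $(x_0,\ldots,x_{p-1},\vec{y})\mapsto(x_0,\ldots,x_{p-1},y_0,\vec{y})$ is smooth, maps $G^{p}\times\Gamma_U^q$ into $G^{p+1}\times\Gamma_U^q$ with the same $U$, and therefore respects the locality condition is precisely the (unstated) content behind the paper's one-line proof.
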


\begin{tabsection}
 Using the same arguments as in Section
 \ref{sect:continuous_and_locally_continuous_group_cohomology} we
 further obtain:
\end{tabsection}

\begin{proposition}(cf.\ Proposition \ref{keyprop})
 If the augmented complexes 
 $A_{s}^{p}(G,A) \hookrightarrow A^{p,*}_{ls}(G ,A)$ are exact, then the 
augmented sub column complexes
$C^p_s (G,A) \hookrightarrow A_{ls}^{p,*} (G,A)^G$ of equivariant cochains 
are exact as well.
\end{proposition}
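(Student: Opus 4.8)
The plan is to carry over the proof of Proposition \ref{keyprop} essentially verbatim, replacing ``continuous'' by ``smooth'' throughout, since the argument there is a purely formal equivariantisation (``averaging'') that never used anything about continuity beyond the fact that it is preserved by the relevant operations. The only genuinely new point to verify is therefore that equivariantisation preserves \emph{local smoothness}, and this is where the smooth module structure enters.

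Concretely, I would assume the augmented column complexes $A_{s}^{p}(G,A)\hookrightarrow A_{ls}^{p,*}(G,A)$ are exact and start from an equivariant vertical cocycle $f_{eq}^{p,q}\in A_{ls}^{p,q}(G,A)^{G}$. Forgetting equivariance, the assumed exactness furnishes a (not necessarily equivariant) cochain $f^{p,q-1}\in A_{ls}^{p,q-1}(G,A)$ with $d_{v}f^{p,q-1}=f_{eq}^{p,q}$. I would then equivariantise by the same formula as in Proposition \ref{keyprop},
\begin{equation*}
 f_{eq}^{p,q-1}(\vec{x},\vec{y}):=x_{0}.f^{p,q-1}(x_{0}^{-1}.\vec{x},x_{0}^{-1}.\vec{y}),
\end{equation*}
and the identical computation---using that $d_{v}$ is $G$-equivariant---shows $d_{v}f_{eq}^{p,q-1}=f_{eq}^{p,q}$. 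Hence every equivariant vertical cocycle is the vertical coboundary of an equivariant cochain, which is exactly the claimed exactness.

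The step requiring the smooth structure, and which I expect to be the main (though mild) obstacle, is checking that $f_{eq}^{p,q-1}$ really lies in $A_{ls}^{p,q-1}(G,A)^{G}$, i.e.\ that it is locally smooth and not merely equivariant. Here I would combine two facts: first, the diagonal left action of $G$ leaves each $\Gamma_{U}^{q-1}$ invariant, since left translation by $x_{0}^{-1}$ does not alter the differences $y_{i}^{-1}y_{j}$ that define $\Gamma_{U}^{q-1}$; second, the module map $G\times A\to A$ and the group operations are smooth. If $f^{p,q-1}$ is smooth on $G^{p+1}\times\Gamma_{U}^{q-1}$, then on that same set the map $(\vec{x},\vec{y})\mapsto(x_{0},x_{0}^{-1}.\vec{x},x_{0}^{-1}.\vec{y})$ is smooth and, by invariance of $\Gamma_{U}^{q-1}$, its last two components again land in $G^{p+1}\times\Gamma_{U}^{q-1}$; composing with $f^{p,q-1}$ and then with the smooth action $x_{0}.(\mathinner{-})$ exhibits $f_{eq}^{p,q-1}$ as smooth on $G^{p+1}\times\Gamma_{U}^{q-1}$, so the \emph{same} identity neighbourhood $U$ witnesses its local smoothness.

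Finally I would point out that no smooth paracompactness or (smooth) loop contractibility hypothesis is needed at this stage: those assumptions enter only when one verifies the exactness hypothesis itself, i.e.\ in the smooth analogue of Proposition \ref{prop:loop_contractible_coefficients}. The present proposition is the purely formal equivariantisation argument and holds as soon as the module action is smooth, exactly as its continuous counterpart holds once the action is continuous.
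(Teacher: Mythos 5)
Your proposal is correct and follows exactly the paper's route: the paper proves this by carrying over the equivariantisation argument of Proposition \ref{keyprop} verbatim, replacing continuity by smoothness. Your explicit verification that $f_{eq}^{p,q-1}$ remains locally smooth (using the invariance of $\Gamma_U^{q-1}$ under the diagonal action and the smoothness of the module map) is a detail the paper leaves implicit, and you correctly identify that neither smooth paracompactness nor loop contractibility is needed at this stage.
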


\begin{proposition}(cf.\ Proposition \ref{siscohtolc})
The image $j_v^n (f)$ of a homogeneous smooth group $n$-cocycle $f$ on 
$G$ in $\tot A_{ls}^{*,*} (G,A)^G$ is cohomologous to the image 
$j_h^n i^n  (f)$ of the locally smooth homogeneous group $n$-cocycle 
$i^n (f)$ in $\tot A_{ls}^{*,*} (G,A)^G$.
\end{proposition}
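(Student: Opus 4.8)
The plan is to transcribe the proof of Proposition~\ref{siscohtolc} into the smooth setting, since the construction there is entirely formal once one knows that the cochains it produces are admissible. First I would take a smooth homogeneous group $n$-cocycle $f\from G^{n+1}\to A$ and, for every pair $(p,q)$ with $p+q=n-1$, set $\psi^{p,q}(\vec{x},\vec{y}):=(-1)^p f(\vec{x},\vec{y})$, using the canonical identification $G^{p+1}\times G^{q+1}\cong G^{n+1}$. The one point that genuinely has to be revisited when passing from continuity to smoothness is whether $\psi^{p,q}$ lies in $A^{p,q}_{ls}(G,A)^G$. This holds: the reshaping $G^{p+1}\times G^{q+1}\cong G^{n+1}$ is a diffeomorphism of Lie groups and the factor $(-1)^p$ is a smooth automorphism of the abelian Lie group $A$, so $\psi^{p,q}$ is smooth on the whole of $G^{p+1}\times G^{q+1}$, a fortiori smooth on each $G^{p+1}\times\Gamma^q_U$, and it inherits $G$-equivariance from $f$.

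With admissibility settled, I would reproduce the two algebraic identities verbatim. The cocycle condition $df=0$, split according to the decomposition of the $n+2$ arguments into the first $p+1$ and the last $q+2$ slots, rewrites the alternating sum over the omitted $y$-arguments as (minus) the alternating sum over the omitted $x$-arguments; this is exactly the computation $d_v\psi^{p,q}=d_h\psi^{p-1,q+1}$ of Proposition~\ref{siscohtolc}, and it manipulates only signs and values of $f$, never the topology or the smooth structure. Consequently the identity transfers to $A^{*,*}_{ls}(G,A)^G$ unchanged.

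Finally I would assemble the telescope. Feeding the $\psi^{p,q}$ into the total differential, the sign $(-1)^p$ built into $d_v$ makes $d_h$ and $d_v$ anti-commute, so in the coboundary of $\sum_{p+q=n-1}(-1)^p\psi^{p,q}$ every interior bidegree cancels in pairs by the identity above, leaving only the two extreme contributions, of bidegrees $(n,0)$ and $(0,n)$. Applying the cocycle condition once more folds these two edge coboundaries into the column augmentation $j_v^n(f)$ and the row augmentation $j_h^n i^n(f)$ respectively. Hence the total coboundary of $\sum_{p+q=n-1}(-1)^p\psi^{p,q}$ equals $j_v^n(f)-j_h^n i^n(f)$, exactly as in Proposition~\ref{siscohtolc}, and the two cocycles are cohomologous in $\tot A^{*,*}_{ls}(G,A)^G$.

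I do not anticipate a real obstacle. The entire analytic content is the single observation that a globally smooth cochain is in particular locally smooth, so that the $\psi^{p,q}$ land in $A^{p,q}_{ls}(G,A)^G$; everything else is the formal bookkeeping already carried out for the continuous case. In particular, neither smooth paracompactness of the finite products of $G$ nor (smooth) loop contractibility of $A$ is needed for this statement — those hypotheses enter only in establishing exactness of the augmented columns, not here.
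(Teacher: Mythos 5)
Your proposal is correct and is essentially the paper's own argument: the paper establishes this proposition simply by carrying the proof of Proposition \ref{siscohtolc} over verbatim to the smooth setting, which is exactly what you do --- defining $\psi^{p,q}=(-1)^p f$ under the identification $G^{p+1}\times G^{q+1}\cong G^{n+1}$, using the cocycle identity to obtain $d_v\psi^{p,q}=d_h\psi^{p-1,q+1}$, and telescoping the total coboundary of $\sum_{p+q=n-1}(-1)^p\psi^{p,q}$ to $j_v^n(f)-j_h^n i^n(f)$. Your added observations --- that the only point requiring re-inspection is that a globally smooth equivariant cochain lands in $A^{p,q}_{ls}(G,A)^G$, and that neither smooth paracompactness of the powers of $G$ nor (smooth) loop contractibility of $A$ is needed for this statement --- are accurate and consistent with where the paper actually invokes those hypotheses (only in Proposition \ref{prop2}, for exactness of the augmented columns).
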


\begin{corollary}
 The homomorphism 
 $H (j_h^{p})^{-1} H (j_v^{p}): H_s^{p} (G,A) \rightarrow H_{ls}^{p} (G,A)$ 
 is induced by the inclusion 
 $C_s^* (G,A) \hookrightarrow C_{ls}^* (G,A)$.
\end{corollary}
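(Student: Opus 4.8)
The plan is to deduce this corollary formally from the two preceding results of this section, exactly as the analogous corollary was obtained in Section~\ref{sect:continuous_and_locally_continuous_group_cohomology}. First I would invoke the smooth analogue of Lemma~\ref{lem:horizontal_isomorphism}: it guarantees that $j_h^*$ induces an isomorphism $H(j_h^p)\from H_{ls}^p(G,A)\to H^p(\tot A_{ls}^{*,*}(G,A)^G)$ in every degree $p$, so that the inverse $H(j_h^p)^{-1}$ occurring in the statement is well defined. Here the contracting homotopy \eqref{eqn:contracting_homotopy} carries over verbatim with ``continuous'' replaced by ``smooth'', since inserting the argument $y_0$ preserves smoothness on the relevant diagonal neighbourhoods.

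Next I would read off the desired identity from the smooth analogue of Proposition~\ref{siscohtolc}. That proposition asserts that for every smooth homogeneous group $p$-cocycle $f$ the total cocycles $j_v^p(f)$ and $j_h^p i^p(f)$ are cohomologous in $\tot A_{ls}^{*,*}(G,A)^G$. Since $j_v^*$, $j_h^*$ and the inclusion $i^*\from C_s^*(G,A)\hookrightarrow C_{ls}^*(G,A)$ are all morphisms of cochain complexes, the value of an induced map on a class $[f]$ is computed by applying the chain map to a representing cocycle; the proposition therefore says exactly that $H(j_v^p)=H(j_h^p\circ i^p)=H(j_h^p)\circ H(i^p)$ as homomorphisms $H_s^p(G,A)\to H^p(\tot A_{ls}^{*,*}(G,A)^G)$. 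Composing on the left with the isomorphism $H(j_h^p)^{-1}$ then yields $H(j_h^p)^{-1}H(j_v^p)=H(i^p)$, and the right-hand side is by definition the homomorphism induced by the inclusion $C_s^*(G,A)\hookrightarrow C_{ls}^*(G,A)$, which is precisely the assertion of the corollary.

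I do not expect a genuine obstacle, since all of the substance has already been carried by the two cited smooth analogues, whose proofs are the verbatim smooth transcriptions of the continuous arguments (the explicit primitive exhibited in the proof of Proposition~\ref{siscohtolc} is built directly from $f$ and makes sense without any contractibility hypothesis on $A$). The only point worth a moment's attention is the passage from the cocycle-level statement of the proposition to the equality of the induced maps in cohomology; but this is immediate once one notes that $j_v^*$ and $j_h^*\circ i^*$ are chain maps, so that the corollary reduces to a purely formal diagram chase.
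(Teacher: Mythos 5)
Your proposal is correct and follows exactly the paper's intended argument: the corollary is stated there without a separate proof precisely because it is the formal consequence of the smooth analogues of Lemma \ref{lem:horizontal_isomorphism} (which makes $H(j_h^{p})^{-1}$ well defined via the row contraction \eqref{eqn:contracting_homotopy}) and of Proposition \ref{siscohtolc} (which gives $H(j_v^{p})=H(j_h^{p})\circ H(i^{p})$ on the level of classes). Composing with $H(j_h^{p})^{-1}$, as you do, is exactly the intended diagram chase, and your observation that no contractibility or exactness hypothesis enters here is also accurate.
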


\begin{corollary}
 If the augmented complexes
 $A_{s}^{p}(G,A) \hookrightarrow A^{p,*}_{ls}(G ,A)$ are exact, then the
 inclusion $C_s^* (G,A) \hookrightarrow C_{ls}^* (G,A)$ induces an
 isomorphism in cohomology.
\end{corollary}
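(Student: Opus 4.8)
The plan is to obtain the corollary as a purely formal consequence of the four smooth-case results already assembled in this section, in exact parallel with the way the Theorem of Section \ref{sect:continuous_and_locally_continuous_group_cohomology} followed in the continuous setting. The exactness hypothesis is precisely the input that activates the double-complex comparison, so no further analytic work on $G$ or $A$ is required beyond what the preceding propositions have packaged.

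First I would feed the hypothesis into the smooth analogue of Proposition \ref{keyprop}, concluding that the equivariant augmented columns $C_s^p(G,A)\hookrightarrow A_{ls}^{p,*}(G,A)^G$ are exact. Since the first-quadrant double complex $A_{ls}^{*,*}(G,A)^G$ then has columns that become exact after augmentation by the bottom row $C_s^*(G,A)$, the column augmentation $j_v^*\from C_s^*(G,A)\hookrightarrow \tot A_{ls}^{*,*}(G,A)^G$ is a quasi-isomorphism; that is, $H(j_v^p)$ is an isomorphism for every $p$. This is the smooth counterpart of the corollary recording $H_s^p(G,A)\cong H^p(\tot A_{ls}^{*,*}(G,A)^G)$ in the continuous case. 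Dually, the smooth Lemma (cf.\ Lemma \ref{lem:horizontal_isomorphism}), which rests on the row-wise contracting homotopy $h^*$ of \eqref{eqn:contracting_homotopy}, already supplies that the row augmentation $j_h^*\from C_{ls}^*(G,A)\hookrightarrow \tot A_{ls}^{*,*}(G,A)^G$ induces an isomorphism $H(j_h^p)$ in every degree.

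It then remains only to identify the comparison map. By the preceding Corollary the composite $H(j_h^p)^{-1}\,H(j_v^p)\from H_s^p(G,A)\to H_{ls}^p(G,A)$ is exactly the homomorphism induced by the inclusion $i^*\from C_s^*(G,A)\hookrightarrow C_{ls}^*(G,A)$. Since $H(j_v^p)$ and $H(j_h^p)$ have just been shown to be isomorphisms, their composite is an isomorphism, whence $i^*$ induces an isomorphism in cohomology, as claimed.

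Because every ingredient is already in place, I do not expect a genuine obstacle in the corollary itself; the single step carrying homological content rather than bookkeeping is the passage from column exactness to $H(j_v^p)$ being an isomorphism. Here I would either invoke the acyclic-assembly lemma for the two augmentations or run the first-quadrant spectral sequence, where column exactness collapses the relevant page onto the augmenting column $C_s^*(G,A)$ and forces $H^p(\tot A_{ls}^{*,*}(G,A)^G)\cong H_s^p(G,A)$ with the isomorphism realised by $j_v^*$. The only thing left to verify is that the convergence and boundedness arguments used for the rows in the continuous case transfer verbatim once continuity is replaced by smoothness and the citations to \cite{F11b} and \cite{F11a} by their smooth counterparts, which is routine.
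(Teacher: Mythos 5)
Your proposal is correct and follows essentially the same route as the paper: the corollary is exactly the formal assembly of the smooth analogues of Proposition \ref{keyprop} (column exactness passes to the equivariant subcolumns, making $j_v^*$ a quasi-isomorphism), Lemma \ref{lem:horizontal_isomorphism} (making $j_h^*$ a quasi-isomorphism), and the preceding corollary identifying $H(j_h^{p})^{-1}H(j_v^{p})$ as the map induced by the inclusion $C_s^*(G,A)\hookrightarrow C_{ls}^*(G,A)$. The paper leaves this assembly implicit, mirroring the Theorem of Section \ref{sect:continuous_and_locally_continuous_group_cohomology}, and your identification of the one genuinely homological step (column exactness forcing $H(j_v^p)$ to be an isomorphism) matches the paper's argument.
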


\begin{tabsection}
 Like in the previous section, the procedure was exactly the same as in
 Section \ref{sect:continuous_and_locally_continuous_group_cohomology},
 but now comes the point where we have to impose an additional condition
 on $G$.
\end{tabsection}

\begin{proposition}\label{prop2}
 If $G$ is a Lie group whose finite products are smoothly paracompact
 and $A$ is a smoothly loop contractible smooth $G$-module, then the
 inclusion $C^*_s (G,A) \hookrightarrow C_{ls}^* (G,A)$ induces an
 isomorphism $H_s^{p} (G,A) \cong H_{ls}^{p} (G,A)$ in cohomology.
\end{proposition}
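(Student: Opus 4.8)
The plan is to reduce Proposition \ref{prop2} to the smooth analogue of Proposition \ref{prop:loop_contractible_coefficients}, exactly as was done in the two preceding sections. By the corollary immediately above, it suffices to verify that the augmented column complexes $A_s^p (G,A) \hookrightarrow A_{ls}^{p,*} (G,A)$ are exact; the passage from this column exactness to the asserted isomorphism $H_s^p (G,A) \cong H_{ls}^p (G,A)$ has already been carried out abstractly, via the smooth analogues of Propositions \ref{keyprop} and \ref{siscohtolc} together with the diagonal contracting homotopy on the rows. Thus the entire content of the proposition is this column exactness, and I would state the proof as a transfer of the proof of Proposition \ref{prop:loop_contractible_coefficients} with ``continuous'' systematically replaced by ``smooth''.

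Concretely, for each open identity neighbourhood $U$ I would form the simplicial subspace $\mathfrak{U}_U[n] = \bigcup_g g.U^{n+1}$ of $G^{*+1}$ and the cosimplicial group $A_s^{p,*} (G,\mathfrak{U}_U ; A)$ of maps $G^{p+1}\times\mathfrak{U}_U[*]\to A$ that are smooth in the first argument, together with its sub-cosimplicial group $A_{s,c}^{p,*} (G,\mathfrak{U}_U ; A)$ of fully smooth maps. Passing to the colimit over all $U$ presents the augmented column as a smooth Alexander--Spanier-type complex with coefficients in the appropriate smooth mapping group, and the smooth counterpart of \cite[Lemma 3.12]{F11b} reduces the exactness of $A_s^p (G,A)\hookrightarrow A_{ls}^{p,*}(G,A)$ to the statement that the inclusion of colimit complexes $\colim A_{s,c}^{p,*}(G,\mathfrak{U}_U;A)\hookrightarrow \colim A_s^{p,*}(G,\mathfrak{U}_U;A)$ induces an isomorphism in cohomology.

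The decisive step is this last isomorphism, and it is where both hypotheses enter. Following the adaptation already indicated for the $k$-case, I would replace the presheaves $A_c^{p,q}(G,-;A)$ and $A^{p,q}(G,-;A)$ by their smooth analogues---functions $G^{p+1}\times U^{q+1}\to A$ smooth everywhere, respectively smooth in the first slot---and invoke the smooth counterpart of \cite[Corollary 5.20]{F11a}. That comparison builds a contracting homotopy from a partition of unity; for it to produce genuinely \emph{smooth} cochains one needs smooth partitions of unity on the spaces $G^{p+1}\times U^{q+1}$, which is precisely what the hypothesis that the finite products of $G$ be smoothly paracompact supplies. The smooth loop contractibility of $A$ furnishes a smooth homotopy $H$ with each $H(t,\mathinner{\cdot})$ a homomorphism, which is what allows the comparison to be run with coefficients in the group $A$ rather than merely a vector space, compatibly with its additive structure, exactly as in the proof of \cite[Corollary 5.20]{F11a}.

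I expect the main obstacle to be establishing this smooth Alexander--Spanier comparison with coefficients in $A$: one must check that the homotopy operators assembled from the smooth partitions of unity and the loop contraction $H$ actually land in the smooth cochain groups (and not merely in locally smooth or continuous ones), and that they are compatible with the cosimplicial structure so as to descend to the colimit. Everything else---the reduction through the corollaries, the colimit identification, and the bookkeeping of the two presheaves---transfers mechanically from the continuous case once smooth paracompactness guarantees the required partitions of unity, so in the write-up I would spell out only these two verifications and refer to \cite{F11a,F11b} for the remainder.
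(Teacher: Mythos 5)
Your proposal is correct and follows essentially the same route as the paper: reduce via the preceding corollary to exactness of the augmented columns $A_s^p(G,A)\hookrightarrow A_{ls}^{p,*}(G,A)$, then transfer the proof of Proposition \ref{prop:loop_contractible_coefficients} by replacing the cosimplicial group with its partially smooth analogue and invoking the smooth counterparts of the cited results of \cite{F11b} and \cite{F11a}, with smooth paracompactness of the finite products of $G$ supplying the smooth partitions of unity needed there. The paper's own proof is exactly this substitution (citing \cite[7.12]{F11b} and \cite[Theorem 7.16]{F11a} as the smooth replacements), so your additional remarks on where the hypotheses enter only make the argument more explicit.
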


\begin{proof}
 Analogously to Proposition \ref{prop:loop_contractible_coefficients}
 one shows that the augmented complexes
 $A_{s}^{p}(G,A) \hookrightarrow A^{p,*}_{ls}(G ,A)$ are exact. The
 assumption on each $G^{k}$ to be smoothly paracompact allows us to
 replace the results from \cite{F11b} and \cite{F11a} accordingly.

 In more detail, we replace the cosimplicial group
 $A^{p,*}(G,\mathfrak{U}_{U};A)$ with
 \begin{equation*}
  A^{p,*}_{s} (G,\mathfrak{U}_U ; A) := 
  \left\{ f : G^{p+1} \times \mathfrak{U}_U [*] \rightarrow A 
  \mid \, \forall \vec{g} \in \mathfrak{U}_U [*] : f (\mathinner{-},\vec{g}) \in C^{\infty} (G^{p+1},A) 
  \right\},
 \end{equation*}
 \cite[Lemma 3.12]{F11b} has to be replaced with \cite[7.12]{F11b} and
 \cite[Corollary 5.20]{F11a} has to be replaced with \cite[Theorem
 7.16]{F11a}.
\end{proof}

\section{Application to finite-dimensional Lie groups}

\begin{tabsection}
 In this section we show which impact the results of the previous
 sections have for finite-dimensional Lie groups. We first recall the
 following fact from \cite[Lemma
 IX.5.2]{BorelWallach00Continuous-cohomology-discrete-subgroups-and-representations-of-reductive-groups}
 or \cite[Theorem 5.1]{HochschildMostow62Cohomology-of-Lie-groups}.
\end{tabsection}

\begin{theorem}
 If $G$ is finite-dimensional, $\mathfrak{a}$ is a quasi complete
 locally convex space\footnote{A locally convex space is said to be
 quasi-complete if each bounded Cauchy net converges.} and a smooth
 $G$-module, then the inclusion
 $C^{*}_{s}(G,\mathfrak{a})\hookrightarrow C^{*}_{c}(G,\mathfrak{a})$
 induces an isomorphism in cohomology.
\end{theorem}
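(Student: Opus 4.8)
The plan is to exhibit the inclusion $C^*_s(G,\mathfrak a)\hookrightarrow C^*_c(G,\mathfrak a)$ as a chain homotopy equivalence by means of an explicit smoothing operator, so that it is in particular a quasi-isomorphism. Since $G$ is a finite-dimensional Lie group it is locally compact and carries a left Haar measure $dg$, and since $\mathfrak a$ is quasi-complete every continuous compactly supported map $G^{m}\to\mathfrak a$ is integrable: its Riemann sums form a bounded Cauchy net, which converges because $\mathfrak a$ is quasi-complete. This is exactly where the quasi-completeness hypothesis enters, and it is what makes the vector-valued integrals below meaningful.

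Fix a nonnegative $\phi\in C^\infty(G)$ with $\int_G\phi\,dg=1$ whose support lies in a chart around the identity, and define on homogeneous continuous $n$-cochains the operator
\begin{equation*}
 (Sf)(g_0,\dots,g_n)=\int_{G^{n+1}}f(g_0h_0,\dots,g_nh_n)\,\phi(h_0)\cdots\phi(h_n)\;dh_0\cdots dh_n .
\end{equation*}
Because the smoothing is performed by right translations while the module structure is carried by the commuting left action, $S$ preserves $G$-equivariance, so $Sf$ is again homogeneous. The substitution $k_i=g_ih_i$ rewrites the dependence on each $g_i$ through the smooth factor $\phi(g_i^{-1}k_i)$, whence one differentiates under the integral sign to see $Sf\in C^\infty(G^{n+1},\mathfrak a)$; thus $S$ lands in $C^*_s(G,\mathfrak a)$. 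Moreover $S$ commutes with the differential: the face that omits the $i$-th argument also removes the $i$-th integration, and the normalisation $\int_G\phi=1$ makes the two operations agree term by term, so $S$ is a chain map.

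It then remains to construct a chain homotopy $k$ with $dk+kd=\iota S-\mathrm{id}$ on $C^*_c(G,\mathfrak a)$, where $\iota$ is the inclusion; since $k$ is assembled from the same convolutions it will restrict to a homotopy $S\iota\simeq\mathrm{id}$ on $C^*_s(G,\mathfrak a)$, and $\iota$ and $S$ will be mutually inverse in cohomology. I would build $k$ by telescoping the partial smoothings $R_j$ that convolve only the first $j$ arguments, so that $R_0=\mathrm{id}$, $R_{n+1}=S$ and $\iota S-\mathrm{id}=\sum_{j}(R_{j+1}-R_j)$; each summand smooths the $j$-th coordinate while leaving the others fixed, and this single-coordinate change of argument is realised as the boundary of a simplicial prism operator that inserts an auxiliary smoothed copy of that coordinate. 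Summing these prisms with the appropriate signs yields $k$.

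The main obstacle is this last step. Single-coordinate smoothing is \emph{not} itself a chain map --- the face omitting that very coordinate spoils the cancellation --- so the homotopy identity $dk+kd=\iota S-\mathrm{id}$ only holds after a careful bookkeeping of signs and of which arguments carry the factor $\phi$ in each prism term, together with a check that every intermediate cochain remains continuous (and, when restricted to $C^*_s$, smooth). Everything else --- convergence of the integrals, preservation of equivariance, smoothness of $Sf$, and compatibility with faces --- is routine once quasi-completeness and the smoothness of $\phi$ are in hand.
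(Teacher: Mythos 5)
You take a genuinely different route from the paper, which in fact contains no proof of this statement at all: it is recalled from Borel--Wallach (Lemma IX.5.2) and Hochschild--Mostow (Theorem 5.1), where the argument is relative homological algebra --- both $C^{*}_{c}(G,\mathfrak{a})$ and $C^{*}_{s}(G,\mathfrak{a})$ arise as $G$-invariants of strong resolutions of $\mathfrak{a}$ by relatively injective (coinduced) modules, with convolution against Haar measure supplying relative injectivity on the smooth side, and the comparison theorem of relative homological algebra then yields the isomorphism. You instead build the inverse explicitly: a smoothing operator $S$ and a chain homotopy. The analytic inputs are identical in both approaches (Haar measure from finite-dimensionality, quasi-completeness of $\mathfrak{a}$ for existence of the vector-valued integrals, differentiation under the integral sign), so what your route buys is an elementary, self-contained argument exhibiting the inclusion as an explicit chain homotopy equivalence, while the cited route buys the derived-functor interpretation and a formalism that generalises beyond this particular comparison. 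The step you flag as the main obstacle does go through, essentially in the form you sketch: one workable choice is
\begin{equation*}
 (kf)(g_0,\dots,g_{n-1})=\sum_{j=0}^{n-1}(-1)^{j}\int_{G^{j+1}}f\bigl(g_0h_0,\dots,g_jh_j,\,g_j,\,g_{j+1},\dots,g_{n-1}\bigr)\,\phi(h_0)\cdots\phi(h_j)\,dh_0\cdots dh_j\,,
\end{equation*}
where the $j$-th prism term convolves the first $j+1$ arguments and reinserts an unsmoothed copy of $g_j$. In $k(df)$ the two faces deleting $g_jh_j$ resp.\ the reinserted $g_j$ give exactly your partial smoothings $R_jf-R_{j+1}f$, which telescope to $f-\iota Sf$; every other term of $k(df)$ cancels against precisely one term of $d(kf)$ (deletion of a convolved argument pairs prisms of adjacent indices $j$ and $j-1$, deletion of an unconvolved argument pairs faces within the same prism), so $dk+kd=\mathrm{id}-\iota S$ and $-k$ satisfies your normalisation. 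Each prism term preserves continuity, smoothness and equivariance for the same reasons as $S$ itself: left invariance of Haar measure, compact support of the kernel, and continuity and linearity of the action of each $g$ on $\mathfrak{a}$ (needed to pull the action out of the integral). Two small points of hygiene: you should require $\phi$ to have \emph{compact} support (support contained in a chart does not by itself make the integrals exist), and the restriction of $k$ to $C^{*}_{s}(G,\mathfrak{a})$ is smooth-valued by the same substitution $k_i=g_ih_i$ you use for $S$, so the homotopy $S\iota\simeq\mathrm{id}$ on the smooth complex indeed comes for free.
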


\begin{lemma}
 If $\Gamma$ is a discrete $G$-module, then the inclusion
 $C^{*}_{s}(G,\Gamma)\hookrightarrow C^{*}_{c}(G,\Gamma)$ is an
 isomorphism. In particular, it induces an isomorphism in cohomology.
\end{lemma}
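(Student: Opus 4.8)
The plan is to observe that, because $\Gamma$ carries the discrete topology, the conditions of continuity and of smoothness on a map $G^{n+1}\to\Gamma$ both collapse to the single condition of being locally constant; consequently the two cochain groups coincide outright, and the inclusion is not merely a quasi-isomorphism but an equality in each degree.

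First I would record the elementary topological fact that a map $f\from M\to\Gamma$ from a manifold $M$ into a discrete space $\Gamma$ is continuous if and only if it is locally constant, i.e.\ each point of $M$ has a neighbourhood on which $f$ takes a single value. Indeed, the preimage of each point of $\Gamma$ is open, so continuity forces local constancy, and the converse is immediate. Applying this with $M=G^{n+1}$ identifies the underlying set of $C_c^n(G,\Gamma)$ with the $G$-equivariant locally constant functions.

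Next I would verify that, for such a discrete target, local constancy is equally equivalent to smoothness. Regarding $\Gamma$ as a $0$-dimensional Lie group, a map into $\Gamma$ is smooth precisely when it is locally constant: on a neighbourhood where the map is constant it is trivially smooth, while conversely every smooth map is continuous and hence locally constant by the previous step. (In the ambient differential calculus this amounts to noting that a locally constant map has all difference quotients and directional derivatives identically zero, so that it is $C^{\infty}$.) This is the only point requiring a word of justification, and it is the step I would regard as the crux, although it is entirely routine.

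Finally I would assemble these observations: a homogeneous group $n$-cochain $f\from G^{n+1}\to\Gamma$ lies in $C_c^n(G,\Gamma)$ exactly when it is $G$-equivariant and continuous, and in $C_s^n(G,\Gamma)$ exactly when it is $G$-equivariant and smooth. By the two equivalences above these conditions describe the very same subset of $\Map(G^{n+1},\Gamma)$, so the inclusion $C_s^n(G,\Gamma)\hookrightarrow C_c^n(G,\Gamma)$ is the identity in each degree and hence an isomorphism of cochain complexes. Since an isomorphism of complexes induces an isomorphism on cohomology, the final assertion follows.
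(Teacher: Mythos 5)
Your proposal is correct and follows exactly the paper's argument, which is the one-line observation that for a discrete target smooth maps and continuous maps coincide (both being the locally constant maps), so the two cochain complexes are equal. Your write-up merely spells out this identification in more detail than the paper does.
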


\begin{proof}
 If $A$ is discrete, then smooth maps are the same thing as continuous
 maps.
\end{proof}

\begin{corollary}
 If $G$ is finite-dimensional, $\mathfrak{a}$ is a quasi complete
 locally convex space and a smooth $G$-module and
 $\Gamma\subset \mathfrak{a}$ is a discrete submodule, then the
 inclusion $C^{*}_{ls}(G,A)\hookrightarrow C^{*}_{lc}(G,A)$ induces an
 isomorphism in cohomology, where $A$ denotes the smooth $G$-module
 $\mathfrak{a}/\Gamma$.
\end{corollary}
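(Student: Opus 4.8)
The plan is to deduce the statement by a five lemma argument applied to the long exact cohomology sequences attached to the short exact sequence of smooth $G$-modules
\begin{equation*}
 0\to\Gamma\to\mathfrak{a}\to A\to 0,
\end{equation*}
comparing the locally smooth with the locally continuous theory. First I would settle the two outer coefficient modules. For the discrete module $\Gamma$ a map into $\Gamma$ is smooth exactly when it is continuous, and this holds after restriction to any diagonal neighbourhood as well; hence $C^{*}_{ls}(G,\Gamma)=C^{*}_{lc}(G,\Gamma)$ as complexes and the comparison map is the identity.

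For the module $\mathfrak{a}$ I would use that, as a locally convex space, it is loop contractible and smoothly loop contractible via the contraction $H(t,v)=(1-t)v$, each $H(t,\mathinner{\cdot})$ being a continuous (indeed smooth) linear endomorphism; moreover quasi completeness is precisely what makes the comparison of smooth and continuous cochains above applicable, and finite-dimensionality of $G$ guarantees that all finite products $G^{k}$ are smoothly paracompact. Thus Proposition \ref{prop2}, Corollary \ref{cor1} and the smooth-versus-continuous comparison theorem above all apply and assemble into the commuting square of inclusion-induced maps
\[
\begin{CD}
H^{*}_{s}(G,\mathfrak{a}) @>\cong>> H^{*}_{ls}(G,\mathfrak{a})\\
@V\cong VV @VVV\\
H^{*}_{c}(G,\mathfrak{a}) @>\cong>> H^{*}_{lc}(G,\mathfrak{a})
\end{CD}
\]
whose top, left and bottom edges are isomorphisms; consequently the right edge $H^{*}_{ls}(G,\mathfrak{a})\to H^{*}_{lc}(G,\mathfrak{a})$ is an isomorphism too.

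Next I would promote the coefficient sequence to short exact sequences of cochain complexes $0\to C^{*}_{ls}(G,\Gamma)\to C^{*}_{ls}(G,\mathfrak{a})\to C^{*}_{ls}(G,A)\to 0$ and its locally continuous analogue. Injectivity and middle exactness are immediate, since a lift of a cochain takes values in $\Gamma=\ker(q)$ precisely when it maps to zero, and the quotient $q\from\mathfrak{a}\to A$ is a $G$-module homomorphism, so $q_{*}$ is an equivariant chain map. The substantial point is surjectivity, i.e.\ lifting a locally smooth (resp.\ locally continuous) homogeneous cochain $f$ with values in $A$ to one with values in $\mathfrak{a}$. Here I would exploit that, because $\Gamma$ is discrete, $q$ is a $G$-equivariant smooth covering. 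Writing $f$ through its based function $\bar f(h_{1},\dots,h_{p})=f(e,h_{1},\dots,h_{p})$, I pick a preimage of $\bar f(e,\dots,e)$ together with a smooth (resp.\ continuous) local section $s$ of $q$ around $\bar f(e,\dots,e)$; after shrinking the identity neighbourhood on which $f$ is regular so that the based arguments land in the domain of $s$, the composite $s\circ\bar f$ is a local lift, which I extend to an arbitrary set-theoretic lift $\tilde{\bar f}$ of $\bar f$ on all of $G^{p}$ and then spread out equivariantly by $\tilde f(g_{0},\dots,g_{p})=g_{0}.\,\tilde{\bar f}(g_{0}^{-1}g_{1},\dots,g_{0}^{-1}g_{p})$. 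Since the module action is continuous (resp.\ smooth) and $q$ is $G$-equivariant, $\tilde f$ is a homogeneous cochain, regular on a diagonal neighbourhood, with $q_{*}\tilde f=f$; its kernel description then identifies the first complex in each sequence.

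Finally I would invoke the induced long exact sequences in cohomology together with the ladder morphism coming from the inclusion $C^{*}_{ls}\hookrightarrow C^{*}_{lc}$. By the first two steps the vertical maps on all $\Gamma$- and $\mathfrak{a}$-terms are isomorphisms, so the five lemma forces $H^{*}_{ls}(G,A)\to H^{*}_{lc}(G,A)$ to be an isomorphism, which is the assertion. I expect the main obstacle to be the surjectivity step: the lift must simultaneously respect the local regularity near the diagonal, which is handled by the covering section, and the $G$-equivariance, which is handled by the based-function description, and one has to verify that shrinking the identity neighbourhood genuinely keeps the based arguments inside the evenly covered set.
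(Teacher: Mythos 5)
Your proposal is correct and takes essentially the same route as the paper: the same coefficient sequence, the same identification $C^{*}_{ls}(G,\Gamma)=C^{*}_{lc}(G,\Gamma)$ for discrete coefficients, the same square of inclusion-induced isomorphisms (Proposition \ref{prop2}, the Hochschild--Mostow/Borel--Wallach comparison, Corollary \ref{cor1}) to handle the $\mathfrak{a}$-terms, and the five lemma applied to the ladder of long exact sequences. The only difference is that you prove the existence of the long exact sequences by hand, via the covering/local-section lifting of cochains, whereas the paper simply cites that the argument of \cite[Appendix E]{Neeb04Abelian-extensions-of-infinite-dimensional-Lie-groups} carries over to the locally continuous theory because the sequence $\Gamma\hookrightarrow\mathfrak{a}\to A$ admits a local section that is smooth (in particular continuous) near the identity.
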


\begin{proof}
 The exact sequence $\Gamma \hookrightarrow \mathfrak{a} \to A$ of
 coefficients admits a smooth local section and thus induces long exact
 sequences in locally smooth and locally continuous cohomology (the
 argument of \cite[Appendix
 E]{Neeb04Abelian-extensions-of-infinite-dimensional-Lie-groups} carries
 over literally to locally continuous group cohomology it the
 coefficient sequence admits a section which is continuous on some
 identity neighbourhood.). Together with the inclusions
 $C^{*}_{ls}(G,A)\hookrightarrow C^{*}_{lc}(G,A)$ this
 gives rise to the commuting diagram
 \begin{equation*}
  \xymatrix@=1em{
  H^{n}_{ls}(G,\Gamma)\ar[r]\ar[d]& H^{n}_{ls}(G,\mathfrak{a}) \ar[r]\ar[d]& H^{n}_{ls}(G,A)\ar[r]\ar[d]&H^{n+1}_{ls}(G,\Gamma)\ar[r]\ar[d]& H^{n+1}_{ls}(G,\mathfrak{a})\ar[d]\\
  H^{n}_{lc}(G,\Gamma)\ar[r]& H^{n}_{lc}(G,\mathfrak{a}) \ar[r]& H^{n}_{lc}(G,A)\ar[r]&H^{n+1}_{lc}(G,\Gamma)\ar[r]& H^{n+1}_{lc}(G,\mathfrak{a}) 
  }
 \end{equation*}
 with exact rows. The first and last two vertical morphisms are isomorphisms by
 the preceding results of this section and Corollary \ref{cor1} and Proposition
 \ref{prop2}. Thus the middle one is also an isomorphism by the five lemma.
\end{proof}

\begin{tabsection}
 The previous result does not hold in for infinite-dimensional $G$, as
 the case $G=C(S^{1},K)$ for $K$ a compact, simple and simply connected
 Lie group shows. The inclusion
 $C^{\infty}(S^{1},K)\hookrightarrow C(S^{1},K)$ is a homotopy
 equivalence by \cite[Remark
 A.3.8]{Neeb02Central-extensions-of-infinite-dimensional-Lie-groups} and
 thus the universal central extension of $C^{\infty}(S^{1},K)$ induces a
 topological non-trivial bundle
 \begin{equation*}
  U(1)\to P\to C(S^{1},K).
 \end{equation*}
 Now $P$ can be equipped with the structure of a topological group,
 since this is invariant under homotopy equivalences \cite[Prop.
 VII.1.3.5]{Grothendieck72}. This gives rise to a non-trivial element in
 $H^{2}_{lc}(C(S^{1},K),U(1))$. However, $C(S^{1},K)$ is simply
 connected and thus does not have any non-trivial central Lie group
 extension by \cite[Theorem
 7.12]{Neeb02Central-extensions-of-infinite-dimensional-Lie-groups} and
 Corollary 13 and Theorem 16 from
 \cite{Maier02Central-extensions-of-topological-current-algebras}. Thus
 $H^{2}_{ls}(C(S^{1},K),U(1))$ vanishes. However, $C(S^{1},K)$ is not
 smoothly paracompact, which would be the natural framework under which
 one would expect that $H^{*}_{ls}(G,A)$ is isomorphic to
 $H^{*}_{lc}(G,A)$. To the knowledge of the authors it is an open
 problem, whether for smoothly paracompact Lie groups $G$ and smooth
 $G$-modules $A=\mathfrak{a}/\Gamma$ the cohomologies 
$H_{lc}^* (G,A)$ and $H_{ls}^* (G,A)$ coincide or not. 
\end{tabsection}

%\bibliographystyle{new}
%\bibliography{newbib}

\begin{thebibliography}{BGN04}
\providecommand{\url}[1]{\texttt{#1}}
\providecommand{\urlprefix}{URL }
\providecommand{\eprint}[2][]{\url{#2}}

\bibitem[Aus11]{Austin11On-discontinuities-of-cocycles-in-cohomology-theories-for-topological-groups} Austin, T.
\newblock \emph{{O}n discontinuities of cocycles in cohomology theories for topological groups}.
\newblock arXiv:1112.1465, 2011

\bibitem[BGN04]{BertramGlocknerNeeb04Differential-calculus-over-general-base-fields-and-rings}
Bertram, W., Gl{\"o}ckner, H. and Neeb, K.-H.
\newblock \emph{Differential calculus over general base fields and rings}.
\newblock Expo. Math. \textbf{22} (2004)(3):213--282

\bibitem[BW00]{BorelWallach00Continuous-cohomology-discrete-subgroups-and-representations-of-reductive-groups}
Borel, A. and Wallach, N.
\newblock \emph{Continuous cohomology, discrete subgroups, and representations
  of reductive groups}, \emph{Mathematical Surveys and Monographs}, vol.~67
  (American Mathematical Society, Providence, RI, 2000), second edn.

\bibitem[Br82]{Brown82Cohomology-of-groups}
Brown, Kenneth S. 
\newblock \emph{Cohomology of Groups}, \emph{Graduate Texts in Mathematics}, vol.~87
  (Springer-Verlag New York Inc., New York, NY, 10010)

\bibitem[Del74]{Deligne74Theorie-de-Hodge.-III}
Deligne, P.
\newblock \emph{Th{\'e}orie de {H}odge. {III}}.
\newblock Inst. Hautes {\'E}tudes Sci. Publ. Math. \textbf{44} (1974):5--77

\bibitem[Fuc10]{F10}
Fuchssteiner, M.
\newblock \emph{Transformation Groups and (Co)Homology}.
\newblock Ph.D. thesis, Technische Universit{\"a}t Darmstadt (Germany) 2010.
\newblock ISBN 978-3-8325-2524-8

\bibitem[Fuc11a]{F11a}
Fuchssteiner, M.
\newblock \emph{Cohomology of local cochains}.
\newblock arXiv:1110.2661, 2011

\bibitem[Fuc11b]{F11b}
Fuchssteiner, M.
\newblock \emph{A spectral sequence connecting continuous with locally
  continuous group cohomology}.
\newblock arXiv:1110.0994, 2011

\bibitem[Gr72]{Grothendieck72}
Grothendieck, A., ``{Seminaire de g{\'e}om{\'e}trie alg{\'e}brique Du
  Bois-Marie 1967-1969 (SGA 7 I)}.,'' {Lecture Notes in Mathematics. 288.
  Springer-Verlag Berlin-Heidelberg-New York}, 1972.

\bibitem[Hov99]{Hovey99Model-categories}
Hovey, M.
\newblock \emph{Model categories}.
\newblock \emph{Mathematical Surveys and Monographs},
  vol.~63 (American Mathematical Society, Providence, RI, 1999)

\bibitem[HM62]{HochschildMostow62Cohomology-of-Lie-groups}
Hochschild, G. and Mostow, G.~D.
\newblock \emph{Cohomology of {L}ie groups}.
\newblock Illinois J. Math. \textbf{6} (1962):367--401

\bibitem[Mai02]{Maier02Central-extensions-of-topological-current-algebras}
Maier, P.
\newblock \emph{Central extensions of topological current algebras}.
\newblock In \emph{Geometry and analysis on finite- and infinite-dimensional
  {L}ie groups (B\polhk edlewo, 2000)}, \emph{Banach Center Publ.}, vol.~55,
  pp. 61--76 (Polish Acad. Sci., Warsaw, 2002)

\bibitem[Moo76]{Moore76Group-extensions-and-cohomology-for-locally-compact-groups.-III}
Moore, C.~C.
\newblock \emph{Group extensions and cohomology for locally compact groups.
  {III}}.
\newblock Trans. Amer. Math. Soc. \textbf{221} (1976)(1):1--33

\bibitem[Nee02]{Neeb02Central-extensions-of-infinite-dimensional-Lie-groups}
Neeb, K.-H.
\newblock \emph{Central extensions of infinite-dimensional {L}ie groups}.
\newblock Ann. Inst. Fourier (Grenoble) \textbf{52} (2002)(5):1365--1442

\bibitem[Nee04]{Neeb04Abelian-extensions-of-infinite-dimensional-Lie-groups}
Neeb, K.-H.
\newblock \emph{Abelian extensions of infinite-dimensional {L}ie groups}.
\newblock In \emph{Travaux math\'ematiques. {F}asc. {XV}}, Trav. Math., XV, pp.
  69--194 (Univ. Luxemb., Luxembourg, 2004)

\bibitem[Seg70]{Segal70Cohomology-of-topological-groups}
Segal, G.
\newblock \emph{Cohomology of topological groups}.
\newblock In \emph{Symposia {M}athematica, {V}ol. {IV} ({INDAM}, {R}ome,
  1968/69)}, pp. 377--387 (Academic Press, London, 1970).

\bibitem[tD08]{tomDieck08Algebraic-topology}
tom Dieck, T.
\newblock \emph{Algebraic topology}.
\newblock EMS Textbooks in Mathematics (European Mathematical Society (EMS),
  Z{\"u}rich, 2008)

\bibitem[WW11]{WagemannWockel11A-Cocycle-Model-for-Topological-and-Lie-Group-Cohomology}
Wagemann, F. and Wockel, C.
\newblock \emph{A Cocycle Model for Topological and Lie Group Cohomology}.
\newblock arXiv:1110.3304, 2011

\end{thebibliography}

\def\polhk#1{\setbox0=\hbox{#1}{\ooalign{\hidewidth
  \lower1.5ex\hbox{`}\hidewidth\crcr\unhbox0}}} \def\cprime{$'$}

\end{document}